\title[Stability for anisotropic nearly umbilical hypersurfaces]{Quantitative stability for anisotropic nearly umbilical hypersurfaces }
\author[A. De Rosa]{Antonio De Rosa}
\address{A.D.R.: Institut f\"ur Mathematik, Universit\"at Z\"urich, Winterthurerstrasse 190, CH-8057 Z\"urich, Switzerland}
\email{antonio.derosa@math.uzh.ch}
\author[S. Gioffrè]{Stefano Gioffrè}
\address{S.G.: Institut f\"ur Mathematik, Universit\"at Z\"urich, Winterthurerstrasse 190, CH-8057 Z\"urich, Switzerland}
\email{stefano.gioffre@math.uzh.ch}
\newtheorem{teo}{Theorem}[section]
\newtheorem{defi}{Definition}[section]
\newtheorem{prop}[teo]{Proposition}
\newtheorem{lemma}[teo]{Lemma}
\numberwithin{equation}{section}
\newcommand{\N}{\mathbb{N}}
\newcommand{\R}{\mathbb{R}}
\newcommand{\erre}{\mathbb{R}}
\newcommand{\esse}{\mathbb{S}}
\newcommand{\W}{\mathcal{W}}
\newcommand{\daA}[2]{\colon #1 \longrightarrow #2}
\newcommand{\restr}[2]{\left. #1 \right|_{#2}}
\newcommand{\Sdot}{\mathring{S}}
\newcommand{\PF}{\mathcal{F}}
\newcommand{\Per}{\mathcal{P}}
\newcommand{\gu}{\mathfrak{u}}
\renewcommand{\epsilon}{\varepsilon}
\renewcommand{\phi}{\varphi}
\renewcommand{\theta}{\vartheta}
\DeclarePairedDelimiter{\abs}{|}{|}
\DeclarePairedDelimiter{\coup}{(}{)}				
\DeclarePairedDelimiter{\norm}{\lVert}{\rVert}
\DeclareMathOperator{\Id}{Id}
\DeclareMathOperator{\id}{Id}
\DeclareMathOperator{\tr}{tr}
\DeclareMathOperator{\divv}{div}
\DeclareMathOperator{\hd}{HD}
\DeclareMathOperator{\Lip}{Lip}
\DeclareMathOperator{\Span}{Span}
\date{}
\begin{document}
\begin{abstract}
We prove a qualitative and a quantitative stability of the following rigidity theorem: an anisotropic totally umbilical closed hypersurface is the Wulff shape. Consider $n \ge 2$, $p\in (1, \, +\infty)$ and $\Sigma$ an $n$-dimensional, closed hypersurface in $\R^{n+1}$,  boundary of a convex, open set. We show that if the $L^p$ norm of the trace-free part of the anisotropic second fundamental form is small, then  $\Sigma$ must be $W^{2, \, p}$-close to the Wulff shape, with a quantitative estimate.
\end{abstract}
\maketitle

\section{Introduction}
Let $\Sigma$ be a smooth, connected surface in $\R^{3}$. 
A point on $\Sigma$ is called umbilical if the two principal curvatures in this point coincide. The classical umbilical theorem (see for istance \cite[Chapter 3.2, Proposition 4]{doCarmo}) states that, if all the points
of $\Sigma$ are umbilical, then $\Sigma$ is either a subset
of a plane or of a sphere. In particular, if $\Sigma$ is closed, it can just coincide with a sphere.
This rigidity theorem was extended to $\R^{n+1}$ in \cite[Lemma 1, p. 8]{Spivak99}. These results are relevant since a local condition, such as the pointwise umbilicality, gives a strong global information.

The umbilical theorem has been strenghten with qualitative stability results in \cite{DLM,DLC0, Daniel}, which have produced important applications in the foliations of asymptotically flat three–manifolds by surfaces of prescribed mean curvature (see \cite{Met,LMS,LM},
where the aforementioned results are crucial for ensuring that the leaves are close to
spheres). A quantitative stability has been later obtained in \cite{Gioffre2016}.

The rigidity umbilical theorem has been proved to be true even in the anisotropic setting in \cite{HeLi}: namely it is shown that the only closed hypersurface with diagonal anisotropic second fundamental form is the Wulff shape. Instead, the stability of this result, even at a qualitative level, has not been addressed in the litearture.

The aim of this paper is to obtain the anisotropic counterparts of the qualitative and quantitative stability proved in \cite{DLC0, Daniel,Gioffre2016} for the isotropic setting. Namely, we consider $p\in (1, \, +\infty)$ and $\Sigma$ an $n$-dimensional, closed hypersurface in $\R^{n+1}$,  boundary of a convex, open set. We prove that the $W^{2, \, p}$-closedness of $\Sigma$ to the Wulff shape is controlled by the $L^p$ norm of the trace-free part of the anisotropic second fundamental form.

Throughout the paper, we will say that $\Sigma$ is convex if it is boundary of a convex bounded open set. The convexity assumption on the surface $\Sigma$ is necessary in order to avoid bubbling phenomena. Indeed, in the recent paper \cite{DMMN}, it is proven that if $\Sigma$ is a closed hypersurface (not necessarily convex) with anisotropic mean curvature $L^2$-close to a costant, then $\Sigma$ is $L^1$-close to a finite union of Wulff shapes.

In order to state our main result, we need some notation. We consider a smooth anisotropic function defined on the $n$-sphere:
$$F \daA{\esse^n}{(0, \, \infty)}.$$
For every  smooth hypersurface $\Sigma$ in $\erre^{n+1}$, we define its \textit{anisotropic surface energy} as
\begin{equation}\label{AnisoPerim}
\PF (\Sigma):= \int_\Sigma F(\nu_\Sigma) \, dV.
\end{equation}
 In particular, we will denote the isotropic surface energy as
$\Per (\Sigma)$, to be the energy $\PF (\Sigma)$ with $F(\nu)=|\nu|$.

For every Lagrangian $F$ and every $m>0$, it is natural to ask whether the problem 
\begin{equation}\label{AnisoProblem}
\inf \set{ \PF(\Sigma) : \Sigma = \partial U, \, |U|=m}
\end{equation}
admits a solution. The answer is positive, provided that $F$ satisfies an ellipticity condition. 

If we denote by $A^F$ the quantity
\begin{equation}\label{AF}
\restr{A^F}{x}[z] := \restr{D^2 F}{x}[z] + F(x)z \quad \mbox{ for every } x \in \esse^n, \, z \in T_x \esse^n,
\end{equation}
we say that $F$ is an \textit{elliptic integrand} if the symmetric matrix $A^F$ is positive definite at every $x \in \esse^n$. In this case, problem \eqref{AnisoProblem} is solvable, and its solution is a dilation of a closed, convex hypersurface $\W$ called \textit{Wulff shape}. In the context of differential geometry, the Wulff shape shares lots of similarities with the round sphere. For instance, it can be seen as the ``round sphere'' for an anisotropic norm on $\R^{n+1}$, namely 
\begin{equation}\label{gauge}
\W = \set{F^* = 1},
\end{equation}
where $F^*$ is the gauge function $F^* : \R^{n+1} \mapsto [0,+\infty)$ defined as
$$F^*(x) := \sup_{\nu \in \erre^{n+1}} \left\{\langle x, \nu \rangle : |\nu|F\left(\frac{\nu}{|\nu|}\right) \leq 1\right\}.$$
A useful property of the differential of the gauge function, that we will implement later, is (see for instance \cite[p. 8]{Robin}): 
\begin{equation}\label{Robin}
\restr{dF^*}{z}[c] = \langle \nu_\W(z), \, c\rangle , \qquad \forall z \in \W,
\end{equation}
where $\nu_\W$ is the normal vector field associated to $\W$.

We have already recalled that an anisotropic version of the umbilical theorem exists, and $\W$ can be characterized as the only hypersurface with diagonal anisotropic second fundamental form; namely $\W$ is the only anisotropic totally umbilical hypersurface, as stated in the following theorem proved in \cite{HeLi}:
\begin{teo}\label{AnisoSecondFFTheorem}
Let $n \ge 2$ be given, and let $\Sigma$ be a closed, oriented hypersurface with outer normal $\nu_{\Sigma}$. Denote the anisotropic second fundamental form as
\begin{equation}\label{AnisoSecondFFDef}
\restr{S_F}{x} \daA{T_x\Sigma}{T_x\Sigma},\ \restr{S_F}{x}:= \restr{A^F}{\nu_\Sigma(x)} \circ \restr{d\nu_\Sigma}{x}.
\end{equation}
If $\Sigma$ has diagonal anisotropic second fundamental form, then up to rescaling, $\Sigma$ is the Wulff shape. 
\end{teo}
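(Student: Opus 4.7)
The plan is to adapt the classical proof of the isotropic umbilical theorem to the anisotropic framework, in two stages.

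\emph{Step 1: the scalar $\lambda$ defined by $S_F = \lambda\,\Id$ is constant.} I would start from the classical Codazzi equation $(\nabla_X d\nu_\Sigma)(Y) = (\nabla_Y d\nu_\Sigma)(X)$ for the Weingarten operator on $\Sigma$, and differentiate $S_F = A^F(\nu_\Sigma) \circ d\nu_\Sigma$ tangentially. When one antisymmetrizes $\nabla S_F$ in the two tangent arguments, the $\nabla d\nu_\Sigma$ contribution vanishes by Codazzi, while the terms coming from the $\nu_\Sigma$-dependence of $A^F$ depend only on the symmetric pair $(d\nu_\Sigma(X), d\nu_\Sigma(Y))$ (since $A^F$ is built from second derivatives of $F$, cf. \eqref{AF}) and therefore cancel as well. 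Plugging $S_F = \lambda\,\Id$ into the resulting anisotropic Codazzi identity and tracing with the metric then gives $(n-1)\,d\lambda = 0$; connectedness of $\Sigma$ combined with $n\ge 2$ forces $\lambda$ to be constant.

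\emph{Step 2: $\Sigma$ is a translated, rescaled Wulff shape.} The constant $\lambda$ must be nonzero, for otherwise $d\nu_\Sigma \equiv 0$ and $\Sigma$ could not be closed. Let $\xi\daA{\esse^n}{\W}$ denote the inverse of the Gauss map of $\W$ (the Cahn--Hoffman map). A direct computation using \eqref{Robin} together with the one-homogeneity of $F^*$ shows the identity $d\xi = A^F$ as maps $T_\nu\esse^n\to T_{\xi(\nu)}\W$. Define
\[
\phi\daA{\Sigma}{\R^{n+1}},\qquad \phi(x) := x - \tfrac{1}{\lambda}\,\xi(\nu_\Sigma(x)).
\]
By the chain rule, $d\phi = \Id_{T\Sigma} - \tfrac{1}{\lambda}(A^F\circ d\nu_\Sigma) = \Id - \tfrac{1}{\lambda} S_F \equiv 0$, so $\phi$ is constant on the connected surface $\Sigma$, say $\phi\equiv x_0$. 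Rearranging, $\lambda(x - x_0) = \xi(\nu_\Sigma(x)) \in \W$ for every $x\in\Sigma$, i.e.\ $\Sigma \subset x_0 + \tfrac{1}{\lambda}\W$ by \eqref{gauge}; since $\Sigma$ and $x_0 + \tfrac{1}{\lambda}\W$ are closed, connected $n$-manifolds, the inclusion is an equality.

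The principal obstacle is Step 1: producing and correctly manipulating the anisotropic Codazzi identity for $S_F$, keeping track of the non-tensorial corrections arising from differentiating $A^F(\nu_\Sigma)$ along $\Sigma$, and verifying that these corrections survive the antisymmetrization only as a symmetric tensor (hence vanish). Once this is in place, Step 2 is essentially the one-line computation of $d\phi$ above.
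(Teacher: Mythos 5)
Your sketch is correct, and note that the paper itself contains no proof of Theorem \ref{AnisoSecondFFTheorem}: it is quoted from \cite{HeLi}, where the argument runs through Minkowski-type integral formulas rather than the pointwise route you take, so there is nothing in the text to match your proof against line by line. Your Step 1 is sound, and the cancellation you need --- that the terms produced by differentiating $\restr{A^F}{\nu_\Sigma}$ along $\Sigma$ drop out after antisymmetrization --- is precisely the Codazzi property $D_iA^F_{jk}=D_jA^F_{ik}$ of $A^F$ on $\esse^n$ (total symmetry of $DA^F$), the very fact the paper invokes in Lemma \ref{AnisoCodazzi}; your phrase ``depend only on the symmetric pair $(d\nu_\Sigma(X),d\nu_\Sigma(Y))$'' is a slightly loose way of saying this, since the symmetry is not automatic but follows from $A^F=D^2F+F\sigma$ together with the constant curvature of $\esse^n$. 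In fact the traced identity \eqref{AnisoCodazziEq} already suffices for Step 1: $S_F=\lambda\Id$ gives $\nabla(n\lambda)=\nabla H_F=\divv_g S_F=\nabla\lambda$, hence $(n-1)\nabla\lambda=0$. Step 2 via the Cahn--Hoffman map $\xi(\nu)=\restr{DF}{\nu}+F(\nu)\nu$, with $d\xi=A^F$ on $\nu^\perp$, is the standard and correct conclusion. Two loose ends to tie up in a full write-up: you should justify $\lambda>0$ (for instance, at a point of $\Sigma$ farthest from an interior point one has $h\ge 0$ with respect to the outer normal, and $A^F>0$ then forces $\lambda\ge 0$ there), since a negative $\lambda$ would yield a reflected copy of $\W$ rather than a dilation; and the identity $d\xi=A^F$ deserves its one-line verification via the one-homogeneous extension of $F$.
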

The stability properties for Theorem \ref{AnisoSecondFFTheorem} are almost completely unclear, even at a qualitative level. In this paper we give an answer to this issue, proving the anisotropic sharp counterpart of the isotropic result shown in \cite[Theorem 1.1]{Gioffre2016}. To this aim, for a given hypersurface $\Sigma$, we will denote the tensor $\Sdot_F$ as
$$\Sdot_F:=S_F-\frac{H_F}{n}g,\qquad \mbox{where} \qquad H_F:=\tr_g (S_F).$$
where $g := \delta_{|\Sigma}$ and $\delta$ is the flat metric in $\R^{n+1}$. 

The main result of this paper is the following:
\begin{teo}\label{MainThm}
 Let $n \ge 2$, $p \in (1, \, \infty)$ be given, and let $F$ be an elliptic integrand. 
 
There exists $\delta_0=\delta_0(n,p,F)>0$ such that, if $\Sigma$ is a closed, convex hypersurface in $\R^{n+1}$ satisfying the two conditions 
 \[
 \Per(\Sigma) = \Per (\W), \qquad \mbox{and} \qquad  \int_\Sigma \abs{\Sdot_F}^p \, dV \le \delta_0,
 \]
 then there exist a smooth parametrization $\psi \daA{\W}{\Sigma}$ and a vector $c_0 \in \R^{n+1}$ satisfying the following estimate:
 \begin{equation}\label{MainEstimate}
 \norm{\psi - \id - c_0}_{W^{2, \, p}(\W)} \le C(n,p,F) \norm{\Sdot_F}_{L^p(\Sigma)},
 \end{equation}
 where $C>0$ depends only on $n$, $p$ and $F$.
 
 If $p \in (1, \, n]$, the pinching condition can be dropped.
\end{teo}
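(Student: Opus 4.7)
The plan is to adapt to the anisotropic setting the three-step strategy from \cite{Gioffre2016}: (i) a qualitative compactness argument to reduce to small perturbations of $\W$; (ii) a linearization of $\Sdot_F$ around $\W$; (iii) an elliptic $L^p$ estimate for the resulting operator modulo its kernel, combined with a contraction-type closing of the estimate.

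For the qualitative step, take any sequence $\{\Sigma_k\}$ of closed convex hypersurfaces with $\Per(\Sigma_k)=\Per(\W)$ and $\|\Sdot_F\|_{L^p(\Sigma_k)}\to 0$. Convexity together with the perimeter constraint yields a uniform diameter bound, hence a subsequence converging in Hausdorff distance to a convex set $K$. The $L^p$-smallness of $\Sdot_F$ and a bootstrap argument exploiting the ellipticity of $A^F$ in \eqref{AF}---plus an anisotropic Minkowski-type integral identity to control $H_F$ from $\Per(\Sigma_k)$---should give uniform $W^{2,p}$ estimates on $\Sigma_k$. Passing to the limit, $\partial K$ is a $W^{2,p}$ convex hypersurface with $\Sdot_F\equiv 0$, and Theorem \ref{AnisoSecondFFTheorem} forces $\partial K=\W+c_0$ for some $c_0\in\R^{n+1}$. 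Hence for $\delta_0$ small there exist smooth parametrizations $\psi\colon\W\to\Sigma$ with $\|\psi-\id-c_0\|_{C^{1,\alpha}(\W)}$ arbitrarily small.

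For the quantitative step, write $\psi=\id+c_0+\Phi$ with $\Phi\colon\W\to\R^{n+1}$ and $\|\Phi\|_{C^{1,\alpha}(\W)}\ll 1$. A Taylor expansion of the map $\Phi\mapsto\Sdot_F(\psi(\W))$ at $\Phi=0$, performed using \eqref{AF} and \eqref{Robin} and the anisotropic umbilicality of $\W$, should give
$$\Sdot_F\circ\psi=\mathcal{L}\Phi+\mathcal{N}(\Phi,\nabla\Phi,\nabla^2\Phi),$$
where $\mathcal{L}$ is a second-order linear operator on $\W$, elliptic by positive-definiteness of $A^F$, and $\mathcal{N}$ collects terms at least quadratic in $(\Phi,\nabla\Phi,\nabla^2\Phi)$. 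The decisive algebraic point is that the kernel of $\mathcal{L}$ should be the $(n+1)$-dimensional space of restrictions to $\W$ of constant vector fields on $\R^{n+1}$---these correspond to infinitesimal translations, on which $\Sdot_F$ vanishes identically. Choosing $c_0$ so that $\Phi$ is $L^2(\W)$-orthogonal to this kernel, an $L^p$ Calder\'on--Zygmund-type estimate for $\mathcal{L}$ on the compact smooth manifold $\W$ yields $\|\Phi\|_{W^{2,p}(\W)}\le C\|\mathcal{L}\Phi\|_{L^p(\W)}$. Combining this with the Taylor expansion, using $\|\mathcal{N}\|_{L^p}\le C\|\Phi\|_{C^{1,\alpha}}\|\Phi\|_{W^{2,p}}$ and the smallness granted by Step 1, one absorbs the quadratic term and obtains \eqref{MainEstimate}. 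The possibility of dropping the pinching condition when $p\in(1,n]$ should come from Sobolev embeddings on the $n$-manifold $\W$: in this range the $L^p$ smallness of $\Sdot_F$ can be converted, together with convexity, directly into the qualitative closeness of Step 1 without any a priori smallness of $\delta_0$.

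The main obstacle I anticipate is the anisotropic counterpart of the identification of $\ker\mathcal{L}$. In the isotropic setting $\mathcal{L}$ reduces to a conformal Killing-type operator on $\esse^n$ whose kernel is known from classical spherical harmonic analysis. On the Wulff shape such tools are unavailable, so one must intrinsically derive the symbol of $\mathcal{L}$ from the tensors $A^F$ and $d\nu_\W$, verify Fredholmness and the exact dimension of its kernel by hand, and build the $L^p$ theory from ellipticity alone. This algebraic and analytic bookkeeping on the non-round target $\W$ is where the main technical difficulty concentrates.
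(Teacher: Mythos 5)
Your overall architecture (qualitative compactness, linearization around $\W$, kernel analysis, absorption of the quadratic error) matches the paper's, but three steps as you describe them would fail or are left unresolved. First, convexity plus the perimeter constraint does \emph{not} yield a uniform diameter bound: long thin ``needles'' have fixed surface area and arbitrarily large diameter, so your compactness argument has no starting point. The paper obtains the two-sided ball inclusion $B_r \subset U \subset B_R$ from the curvature pinching, via the chain $\|h\|_{L^p} \le c_1\|S_F\|_{L^p} \le c_2(1+\|\Sdot_F\|_{L^p})$ (Proposition \ref{SecondFFControl}, which uses the positive square root of $A^F$ and the fact that $\int_\Sigma \det S_F = \int_{\esse^n}\det A^F$ is a fixed constant) combined with Perez's Proposition \ref{DanielConvexity}. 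Second, and more fundamentally, you never establish that $L^p$-smallness of the \emph{trace-free} part forces $S_F$ itself to be $L^p$-close to $\lambda\Id$; your appeal to ``an anisotropic Minkowski-type integral identity to control $H_F$'' does not do this. The paper devotes Section \ref{OT} to exactly this point: it proves the anisotropic Codazzi-type identity $\nabla H_F = \divv_g S_F$, which in graph charts reads $\partial_k H_F = \tfrac{n}{n-1}\partial_i\Sdot^i_k$, then applies local elliptic estimates and a patching lemma to get $\|H_F-\lambda_0\|_{L^p(\Sigma)} \le C\|\Sdot_F\|_{L^p(\Sigma)}$. Without this oscillation theorem both your compactness step and your linearization lose their input.

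Third, the kernel identification you flag as ``the main technical difficulty'' is indeed the crux, and you leave it open. The paper resolves it with a specific device you should note: since $\nu_\Sigma\colon\W\to\esse^n$ is a diffeomorphism, one writes $u=f\circ\nu$ and computes $A^F\,\tilde{L}[f\circ\nu] = D^2 f + f\sigma$ on the round sphere, so invertibility of $A^F$ reduces the kernel and the $W^{2,p}$ coercivity modulo the kernel to the classical spherical estimate $\|D^2 f + f\sigma\|_{L^p(\esse^n)} \gtrsim \inf_c\|f-\langle c,\cdot\rangle\|_{W^{2,p}(\esse^n)}$; no Fredholm theory on $\W$ needs to be built from scratch. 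Finally, ``choosing $c_0$ so that $\Phi$ is $L^2$-orthogonal to the kernel'' is not a linear projection: translating $\Sigma$ by $c$ changes the radial function $u_c$ nonlinearly, and the paper must prove the expansion $\Phi(c)=\Phi(0)-c+O(\epsilon^{3/2})$ (using the gauge function identity \eqref{Robin}) and run a topological argument to find $c_0$ with $\Phi(c_0)=0$ before the absorption of the $\sqrt{\epsilon}\|u\|_{W^{2,p}}$ term can close the estimate.
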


\section{Notation, preliminaries and strategy of the proof}
Throughout the paper, we will use the following notation:
\begin{center}
\begin{tabular}{ll}
$\langle \cdot,\cdot \rangle$ & Euclidean scalar product in $\R^{n+1}$. \\
$\langle \cdot,\cdot \rangle_{L^2}$ & scalar product in $L^2$. \\
$d_{\hd}$ & Hausdorff distance \\
$\esse^n$ & standard sphere in $\R^{n+1}$. \\
$\Sigma$ & closed, $n$-dimensional hypersurface in $\R^{n+1}$. \\
$\delta$  & standard metric in $\R^{n+1}$. \\
$\sigma$ & standard metric on $\esse^n$. \\
$\omega$ & restriction of $\delta$ to $\W$.   \\
$g$ & restriction of $\delta$ to $\Sigma$.   \\
$h$ & second fundamental form for $\Sigma$.  \\
$B^g_r(x)$ & geodesic ball in  $\Sigma$ centred in $x$, of radius $r$. \\
$B^k_r(x)$ & ball in  $\erre^k$ centred in $x$, of radius $r$ (when $x=0$, we write $B^k_r$). \\
$\partial$ & usual derivative in $\R^{n+1}$. \\
$D$ & Levi-Civita connection associated to $\esse^n$. \\
$\nabla$ & Levi-Civita connection associated to $\Sigma$ or to $\W$. \\
 \end{tabular}
\end{center}
If not diffrently specified, along the paper $\nu$ will denote the normal vector field associated to $\W$.

Given a bounded hypersurface $\Sigma$, for every function $f:\Sigma\to \erre$, we will call its mean the value 
$$\overline{f}=\frac{1}{\Per(\Sigma)}\int_{\Sigma}f.$$

For every Lebesgue-measurable set $A$, we will denote with $|A|$ the Lebesgue measure of $A$.
Given two sets $A,B\subset \R^{n+1}$, the set 
\[
A \Delta B := A \setminus B \cup B \setminus A
\] 
is called symmetric difference of $A$ and $B$.

A measurable set $E \subset \erre^{n+1}$ is said to be a set of finite perimeter if the distributional gradient $D\chi_E$ of the characteristic function of $E$ is an $(n+1)$-valued Borel measure on $\erre^{n+1}$ with total variation $|D\chi_E|(\erre^{n+1}) < \infty$.

Let $E$ be a set of finite perimeter in $\erre^{n+1}$. We define the \textit{anisotropic asymmetry index} as 
\begin{equation}\label{Asymmetry}
A(E):= \min_{ x \in \erre^{n+1}}\left \{ \frac{\abs{U_\W \, \Delta \,  (x + rE)}  }{\abs{E}} \colon  \,  \abs{rU_\W} = \abs{E} \right \} ,
\end{equation}
where $U_\W$ is the open set enclosed by $\W$, and the \textit{anisotropic isoperimetric deficit} as 
\begin{equation}\label{Deficit}
\delta(E):= \frac{\PF(\partial E)}{(n+1) \abs{U_\W}^{\frac{1}{n+1}} \abs{E}^{\frac{n}{n+1}}} -1.
\end{equation}
The relation among $A(E)$ and $\delta(E)$ is well studied in the framework of isoperimetric problems. Indeed, the following anisotropic deficit estimate proved in \cite[Thm 1.1]{FigMagPrat} holds: 
\begin{teo}\label{DeficiThm}
Every set $E$ of finite perimeter in $\erre^{n+1}$ satisfies the following inequality:
\begin{equation}\label{DeficitEst}
A(E) \le C(n) \sqrt{\delta(E)}.
\end{equation}
If $|E|=|U_\W|$, then the inequality \ref{DeficitEst} can be written as 
\begin{equation}\label{DeficitEstDue}
A(E) \le C(n) \sqrt{\PF( \partial E) - \PF(\W)}.
\end{equation}
\end{teo}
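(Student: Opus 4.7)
The plan is to follow the mass transportation approach of Figalli--Maggi--Pratelli \cite{FigMagPrat}, which tracks the deficits in the Gromov/Brenier optimal transport proof of the anisotropic isoperimetric inequality. By scaling invariance I may reduce to the case $|E|=|U_\W|$, where \eqref{DeficitEst} is equivalent to \eqref{DeficitEstDue} up to a dimensional constant, and by standard approximation I may assume $E$ has smooth boundary; the estimate is trivial when the deficit is large, so I restrict to the small-deficit regime.

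Let $T=\nabla\phi\colon E\to U_\W$ be the Brenier optimal transport map between the normalized Lebesgue measures. Then $T$ is a.e.\ differentiable, $DT$ is symmetric positive semi-definite, and $\det DT = 1$ a.e.\ on $E$. Denoting by $\lambda_1,\dots,\lambda_{n+1}\geq 0$ the eigenvalues of $DT$, the arithmetic-geometric inequality gives
$$\operatorname{div} T(x) = \sum_{i=1}^{n+1}\lambda_i(x) \geq (n+1)\Bigl(\prod_i \lambda_i(x)\Bigr)^{1/(n+1)} = n+1.$$
Since $T(x)\in\overline{U_\W}$, the definition of the gauge together with $F^*(T(x))\leq 1$ yields the pointwise bound $\langle T(x),\nu\rangle\leq F(\nu)$ for every unit vector $\nu$. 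The divergence theorem then produces the anisotropic (Wulff) isoperimetric inequality
\begin{align*}
(n+1)|E| &\leq \int_E \operatorname{div} T\, dx = \int_{\partial^* E}\langle T, \nu_E\rangle\, d\mathcal{H}^n \\
&\leq \int_{\partial^* E} F(\nu_E)\, d\mathcal{H}^n = \PF(\partial E).
\end{align*}

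The quantitative improvement comes from the elementary inequality $\sum_i \lambda_i - (n+1)\geq c(n)\sum_i (\sqrt{\lambda_i}-1)^2$, which combined with the pointwise boundary gap gives
\begin{equation*}
\int_E \sum_i(\sqrt{\lambda_i}-1)^2\, dx + \int_{\partial^* E}\bigl(F(\nu_E)-\langle T,\nu_E\rangle\bigr)\, d\mathcal{H}^n \leq C(n)\bigl(\PF(\partial E) - \PF(\W)\bigr).
\end{equation*}
The volume term yields an $L^2$ control of the form $\|DT-\id\|_{L^2(E)}^2 \lesssim \PF(\partial E)-\PF(\W)$ (away from degenerate eigenvalues, which must be controlled separately), while the boundary term encodes additional rigidity of $T$ at $\partial^* E$.

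The crux is to upgrade these controls into an $L^1$ estimate for $T$ modulo a translation: the anisotropic Poincaré-trace inequality developed in \cite{FigMagPrat} produces $x_0 \in \erre^{n+1}$ with
$$\|T-\id-x_0\|_{L^1(E)}\leq C(n)\sqrt{\PF(\partial E)-\PF(\W)}.$$
Since $T_\#(\chi_E\, dx) = \chi_{U_\W}\, dx$, the $L^1$-closeness of $T$ to $\id+x_0$ controls the symmetric difference $|U_\W\Delta(x_0+E)|$ (by tracking the transport segments that escape $x_0+E\cap U_\W$), and definition \eqref{Asymmetry} yields $A(E)\leq C(n)\sqrt{\PF(\partial E)-\PF(\W)}$, which is \eqref{DeficitEstDue}. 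Undoing the scaling restores \eqref{DeficitEst}. The main obstacle, and the technical heart of \cite{FigMagPrat}, is precisely this last step: a naive Sobolev embedding from the $L^2$-smallness of $DT-\id$ would lose a dimensional factor and yield a suboptimal power of the deficit, so one must simultaneously exploit the boundary deficit and prove the sharp anisotropic Poincaré-trace estimate tailored to the Brenier map.
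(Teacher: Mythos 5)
First, a bookkeeping point: Theorem~\ref{DeficiThm} is not proved in this paper at all; it is quoted as an external result from \cite[Thm.~1.1]{FigMagPrat} and used as a black box, so there is no internal proof to compare against. You correctly locate the source, and your sketch reproduces the broad strokes of the Figalli--Maggi--Pratelli mass-transport scheme accurately: the Brenier map $T=\nabla\phi$ from $E$ to $U_\W$, the arithmetic--geometric inequality $\divv T\ge n+1$, the divergence theorem combined with the gauge bound $\langle T,\nu_E\rangle\le F(\nu_E)$ yielding the Wulff inequality, and a quantitative refinement of AM--GM that gives interior control of $DT$ near $\id$.

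The gap is in the final passage. An $L^1$ bound $\|T-\id-x_0\|_{L^1(E)}\lesssim\sqrt{\PF(\partial E)-\PF(\W)}$ does not, by itself, control the symmetric difference $|U_\W\Delta(x_0+E)|$ at the claimed rate. For $x\in E\setminus U_\W$ one only has $|T(x)-x|\ge\mathrm{dist}(x,U_\W)$, which is arbitrarily small near $\partial U_\W$, and a Chebyshev/tubular-neighbourhood argument starting from the $L^1$ estimate yields at best $A(E)\lesssim(\PF(\partial E)-\PF(\W))^{1/4}$, losing half the exponent. The actual bridge in \cite{FigMagPrat} is a geometric lemma bounding $A(E)$ by the \emph{boundary} integral $\int_{\partial^* E}\abs{1-F^*(T)}\,d\mathcal{H}^n$, and it is this boundary trace that they control by combining the boundary part of the Gromov deficit $\int_{\partial^* E}\bigl(F(\nu_E)-\langle T,\nu_E\rangle\bigr)\,d\mathcal{H}^n$ with their dimensionless trace/Poincar\'e inequality applied to the interior smallness of $DT-\id$. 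Your sketch names the right technical tool but applies it to the wrong quantity, and the geometric lemma linking the boundary integral to the asymmetry---the step that actually produces $A(E)$ with the sharp power---is missing.
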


\subsection*{Strategy of the proof} 
For the reader convenience, we divide the proof of Theorem \ref{MainThm} in four sections:
\begin{itemize}
\item[Section \ref{OT}.] The main result of this section is Theorem \ref{MeanMainThm}, in which for every convex, closed hypersurface $\Sigma$, we estimate the oscillation of $S_F$ with $\Sdot_F$:  namely we show that
$$\min_{\lambda \in \erre} \norm{S_F - \lambda \Id}_{L^p(\Sigma)} \le C \norm{\Sdot_F}_{L^p(\Sigma)}.$$
\item[Section \ref{Qual}.] We prove Theorem \ref{QualitativeAppThm}, which is  a qualitative stability version of Theorem \ref{AnisoSecondFFTheorem}, showing that an hypersurface with anisotropic second fundametal form close in $L^p$-norm to the identity must be close to the Wulff shape with respect to Hausdorff distance.
\item[Section \ref{Quan}.] We linearise the expression found in Section \ref{OT} and we obtain a linear equation with some error terms. We study the operator arising from it and in Theorem \ref{MainEstimateb} we find a proper estimate depending on the center of $\Sigma$.
\item[Section \ref{Conclusion}.] We properly center $\Sigma$ and remove the error terms, obtaining Theorem \ref{MainThm}.
\end{itemize}

\section{The oscillation theorem}\label{OT}
In this section we will often assume that the hypersurfaces satisfy
\[
\Per(\Sigma)=1.
\]
Nevertheless, since all the quantities we consider have nice rescaling properties, all the results still hold for hypersurfaces with arbitrary isotropic surface energy. 

The main theorem of this section is the following:

\begin{teo}\label{MeanMainThm}
Let $n \ge 2$, $p \in (1, \, \infty)$ and $\delta_0>0$ be given. Let $F \daA{\esse^n}{(0, \, +\infty)}$ be an elliptic integrand. There exists a constant $C$ depending only on $n$, $p$, $\delta_0$ and $F$ such that the following holds. 

If $\Sigma$ is a closed and convex hypersurface in $\erre^{n+1}$ which satisfies
\begin{itemize}
\item[$(i)$] the perimeter condition: $\displaystyle \Per(\Sigma)=1$
\end{itemize}
and
\begin{itemize}
\item[$(ii)$] the pinching condition: $\displaystyle \int_\Sigma \abs{\Sdot_F}^p \le \delta_0 $,
\end{itemize}
then the following estimate is satisfied:
\begin{equation}\label{MeanMainEstimatea}
\min_{\lambda \in \erre} \norm{S_F - \lambda \Id}_{L^p(\Sigma)} \le C \norm{\Sdot_F}_{L^p(\Sigma)}.
\end{equation}
If $p \in (1, \, n]$, the pinching condition can be dropped.
\end{teo}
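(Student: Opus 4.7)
The approach is to decompose $S_F - \lambda \Id$ into its trace-free and trace parts, reducing the theorem to a Poincar\'e--Wirtinger-type estimate for the scalar anisotropic mean curvature $H_F$, and then to exploit an anisotropic contracted Codazzi identity through a duality argument based on elliptic regularity on $\Sigma$. Since $\Sdot_F = S_F - (H_F/n)\Id$, for every $\lambda \in \erre$ one has
\[
S_F - \lambda\Id \;=\; \Sdot_F \;+\; \Big(\frac{H_F}{n} - \lambda\Big)\Id.
\]
Choosing $\lambda = \overline{H_F}/n$ and applying the triangle inequality in $L^p(\Sigma)$, the theorem follows once I prove
\begin{equation}\label{PlanReduction}
\|H_F - \overline{H_F}\|_{L^p(\Sigma)} \;\le\; C\,\|\Sdot_F\|_{L^p(\Sigma)},
\end{equation}
since $\|(H_F/n - \overline{H_F}/n)\Id\|_{L^p}$ equals $\sqrt{n}\,\|H_F - \overline{H_F}\|_{L^p}/n$ up to absolute constants.

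To establish \eqref{PlanReduction}, the next step is to derive the anisotropic analogue of the contracted Codazzi equation $\divv_g\mathring{h} = \tfrac{n-1}{n}\nabla H$, namely an identity of the form
\[
\divv_g \Sdot_F \;=\; c_n\,\nabla H_F \;+\; R, \qquad c_n \in \erre\setminus\{0\},
\]
where $R$ is a lower-order remainder linear in $d\nu_\Sigma$ with coefficients depending on the derivatives of $F$ along $\nu_\Sigma$. This identity will follow from a direct computation using the definitions \eqref{AF}--\eqref{AnisoSecondFFDef}, together with the classical Codazzi--Mainardi equations for $h$. The convexity of $\Sigma$ combined with $\Per(\Sigma)=1$ guarantees pointwise non-negativity of the principal curvatures and a uniform diameter bound, which will be used to control the remainder $R$ in terms of $|\Sdot_F|$ (plus a constant multiple of $|H_F-\overline{H_F}|$ that can be absorbed) with constants depending only on $n$ and $F$.

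The oscillation bound \eqref{PlanReduction} is then obtained by a standard duality argument. Given a smooth test function $\phi$ with $\overline{\phi}=0$ on $\Sigma$, solve $\Delta_g \psi = \phi$ with $\overline{\psi}=0$, integrate by parts twice and substitute the Codazzi identity above to obtain
\[
\Big|\int_\Sigma (H_F - \overline{H_F})\,\phi\,dV\Big| \;\le\; C\int_\Sigma |\Sdot_F|\,|\nabla^2\psi|\,dV \;+\; (\text{lower order}).
\]
Combining H\"older's inequality with the $L^{p'}$ Calder\'on--Zygmund estimate $\|\nabla^2\psi\|_{L^{p'}(\Sigma)}\le C\|\phi\|_{L^{p'}(\Sigma)}$ on $\Sigma$ and taking the supremum over admissible $\phi$ yields \eqref{PlanReduction}, and the $\delta_0$-absorption of the lower-order remainder is then straightforward for $\delta_0$ small.

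The main obstacle will be the \emph{uniformity} of the Calder\'on--Zygmund and Poincar\'e constants across the admissible class of hypersurfaces, and the absorption of $R$. For $p\in(1,n]$, the combination of convexity, the perimeter normalization $\Per(\Sigma)=1$ and the Michael--Simon Sobolev inequality already yields sufficient geometric control without any smallness assumption on $\Sdot_F$, explaining why the pinching hypothesis can be dropped. For $p>n$, in contrast, the pinching condition $\int_\Sigma|\Sdot_F|^p\le\delta_0$ is exploited to force $\Sigma$ (up to translation) to be Hausdorff- and then $C^{1,\alpha}$-close to the Wulff shape $\W$ via the qualitative stability of Section~\ref{Qual}, so that the elliptic estimates on $\Sigma$ reduce to those on $\W$ by a perturbative argument; the threshold $\delta_0$ is dictated precisely by the smallness needed for this perturbation to close and for the remainder $R$ to be absorbed.
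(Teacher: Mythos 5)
Your opening reduction is exactly the paper's: write $S_F - \lambda\Id = \Sdot_F + (\tfrac{H_F}{n} - \lambda)\Id$ and reduce everything to an oscillation estimate $\min_\lambda\|H_F-\lambda\|_{L^p(\Sigma)}\le C\|\Sdot_F\|_{L^p(\Sigma)}$, and your key identity is also the right one --- indeed it is cleaner than you anticipate: since $A^F=D^2F+F\sigma$ is a Codazzi tensor on $\esse^n$, the contracted identity is \emph{exact}, $\divv_g S_F=\nabla H_F$ (Lemma \ref{AnisoCodazzi}), hence $\divv_g\Sdot_F=\tfrac{n-1}{n}\nabla H_F$ with no remainder $R$. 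This is fortunate, because your fallback plan of absorbing a nonzero $R$ ``linear in $d\nu_\Sigma$'' into $|\Sdot_F|$ plus an absorbable multiple of $|H_F-\overline{H_F}|$ would not work: such a term would generically be of size $|h|$, which is neither small nor controlled by $\Sdot_F$.

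The genuine gap is in the duality step. To run $\int_\Sigma(H_F-\overline{H_F})\phi = c_n\int_\Sigma\langle\Sdot_F,\nabla^2\psi\rangle$ you need the global Calder\'on--Zygmund estimate $\|\nabla^2\psi\|_{L^{p'}(\Sigma)}\le C\|\Delta_g\psi\|_{L^{p'}(\Sigma)}$ with a constant uniform over the admissible class. The hypotheses give only an $L^p$ bound on $h$ (Proposition \ref{SecondFFControl}), i.e.\ $W^{1,p}$ control of the metric in graph coordinates; for $p\le n$ this is critical or subcritical, the Christoffel symbols are merely $L^p$, and uniform Hessian bounds for $\Delta_g$ do not follow --- Michael--Simon yields Sobolev and Poincar\'e inequalities but says nothing about second derivatives of solutions. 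The paper is built precisely to avoid this: in a graph chart the identity collapses to the \emph{flat, first-order} divergence-form relation $\partial_kH_F=\tfrac{n}{n-1}\partial_i\Sdot^i_k$ (Proposition \ref{GraphChartThm}), whose local $L^p$ estimate has constants depending only on $n$, $p$, $r$ and the Lipschitz bound of the chart (supplied by convexity and the ball inclusion of Proposition \ref{DanielConvexity}), and the local constants $\lambda_q$ are then patched into one $\lambda_0$ by Proposition \ref{Local}. Two further problems: (a) for $p>n$ you invoke the qualitative closeness to $\W$ of Section \ref{Qual}, but Theorem \ref{QualitativeAppThm} is itself proved \emph{using} the oscillation estimate \eqref{controllo}, so this is circular; the paper instead uses H\"older and $\Per(\Sigma)=1$ to reduce the supercritical case to the $L^n$ pinching required by Proposition \ref{DanielConvexity}. (b) Your claim that for $p\le n$ no smallness assumption is needed for the geometric control is false as stated --- a long thin convex body has $\Per(\Sigma)=1$ and arbitrarily small inradius --- so the uniform ball inclusion still needs the pinching; the hypothesis is removed only a posteriori via the dichotomy $\|\Sdot_F\|_p\le1$ versus $\|\Sdot_F\|_p>1$, the latter case being settled directly by \eqref{SecondFFControlEq} with the competitor $\lambda=0$.
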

The proof of Theorem \ref{MeanMainThm} will follow from the following three propositions: 
\begin{prop}\label{DanielConvexity}
Let $n \ge 2$, $p \in (1, \, n]$ and $\Sigma$ be a closed hypersurface in $\erre^{n+1}$, which is the boundary of a convex set $U$, and satisfies:
\begin{itemize}
\item[$(i)$] the perimeter condition: $\displaystyle \Per(\Sigma)=1 $
\end{itemize}
and
\begin{itemize}
\item[$(ii)$] the pinching condition: $\displaystyle \int_\Sigma \abs{h}^p \le \delta_0 $.
\end{itemize}
Then there exist two positive radii $0<r<R$ depending only on $n$, $p$ and $c_0$ and a vector $x \in \erre^{n+1}$ such that 
\begin{equation}\label{Ballinclusion}
B^{n+1}_r(x) \subset U \subset B^{n+1}_R(x).
\end{equation}
\end{prop}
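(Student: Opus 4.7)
The plan is to establish, separately, an upper bound on the diameter of $U$ and a lower bound on its inradius, each in terms of $n$, $p$, $\delta_0$, and then to invoke John's theorem to place the two balls concentrically.

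For the upper bound on the diameter, I would combine convexity with a Topping-type estimate: the Michael--Simon Sobolev inequality iterated on geodesic balls gives $\operatorname{diam}(\Sigma)\le C(n)\int_\Sigma|H|^{n-1}\,dV$. For $p\ge n-1$, H\"older's inequality together with $|H|\le\sqrt{n}\,|h|$ and $\Per(\Sigma)=1$ yields directly $\operatorname{diam}(\Sigma)\le R(n,p,\delta_0)$. For $1<p<n-1$ one exploits convexity more carefully: the Gauss map is a surjection $\Sigma\to\esse^n$ with Jacobian equal to the Gauss--Kronecker curvature $K\le(|h|/\sqrt{n})^n$, hence $\int_\Sigma K\,dV=|\esse^n|$. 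This forces $\delta_0\ge c(n)$ (the pinching is not vacuous) and, via a two-step H\"older interpolation between $L^p$ and $L^n$, recovers the needed bound on $\int_\Sigma|H|^{n-1}$.

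For the lower bound on the inradius I would argue by contradiction: if the inradius $\varepsilon$ of $U$ is very small, convexity forces $U$ into a slab of width $\varepsilon$ in some direction, and since $\operatorname{diam}(U)\le R$, the boundary $\Sigma$ must contain a ``rim''---the curved strip joining the two nearly-flat faces of $U$---on which the principal curvatures are of order $1/\varepsilon$ and whose $n$-dimensional area is $\gtrsim R^{n-1}\varepsilon$. Integrating yields $\int_\Sigma|h|^p\gtrsim R^{n-1}\varepsilon^{1-p}$, which contradicts the pinching once $\varepsilon$ falls below a threshold depending only on $n,p,\delta_0$, precisely because $p>1$. John's theorem then provides an ellipsoid $E$ centered at some $x\in U$ with $E\subset U\subset(n+1)E$, whose smallest semi-axis is bounded below in terms of the inradius and whose largest semi-axis is bounded above in terms of the diameter, producing the desired concentric balls $B^{n+1}_r(x)\subset U\subset B^{n+1}_R(x)$.

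The main obstacle is making the rim estimate quantitative in a dimension-controlled way: one must extract, purely from convexity and the slab geometry, a region of $\Sigma$ of definite $n$-dimensional area on which the shape operator has eigenvalues uniformly of order $1/\varepsilon$. The most natural approach is to pull back the rim via the Gauss map to a neighborhood of the equator of $\esse^n$ and estimate the inverse Jacobian, absorbing the dependence on the fine shape of $U$ into the previously established diameter bound.
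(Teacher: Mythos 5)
The paper does not prove this proposition at all --- it is quoted from \cite[Prop.~2.7]{Daniel} --- so the only question is whether your argument stands on its own. The inradius part is the right idea: the rim computation $\int_{\mathrm{rim}}|h|^p\gtrsim \varepsilon^{1-p}$ is exactly the mechanism by which $p>1$ rules out degeneration, even though making it rigorous (extracting a definite amount of area with curvature of order $1/\varepsilon$ from convexity alone) is, as you say, the real work. Two corrections there: the rim area is bounded below by $(\mathrm{diam})^{n-1}\varepsilon$ only if you have a \emph{lower} bound on the diameter, not the upper bound $R$ you invoke; fortunately $\Per(\Sigma)=1$ and monotonicity of perimeter under inclusion of convex bodies give $\mathrm{diam}(U)\ge c(n)>0$ for free, so the rim argument does not need the diameter upper bound at all. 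The genuine gap is in your diameter bound for $1<p<n-1$ (a nonempty range for $n\ge 3$): the identity $\int_\Sigma K=|\esse^n|$ combined with $K\le c(n)|h|^n$ yields only a \emph{lower} bound on $\int_\Sigma|h|^n$, whereas interpolating $L^{n-1}$ between $L^p$ and $L^n$ requires an \emph{upper} bound on $\|h\|_{L^n}$. No such bound holds: convex bodies of unit perimeter obtained by rounding a polytope at scale $\varepsilon$ have $\int_\Sigma|h|^n\to\infty$. (The parenthetical ``this forces $\delta_0\ge c(n)$'' has the same direction error: H\"older turns a lower bound on $\int|h|^n$ into nothing about $\int|h|^p$ for $p<n$.)

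The gap is avoidable by reordering the proof. Establish the inradius bound first, using only $\Per(\Sigma)=1$, convexity, and $p>1$; this gives $B_{r}(x)\subset U$ with $r=r(n,p,\delta_0)$. Then for any $z\in U$ the convex hull $\operatorname{conv}\bigl(B_{r}(x)\cup\{z\}\bigr)$ is contained in $U$, and since surface area is monotone under inclusion of convex bodies (the nearest-point projection is $1$-Lipschitz), $1=\Per(\Sigma)\ge c(n)\,r^{n-1}|z-x|$, so $\operatorname{diam}(U)\le C(n)r^{1-n}=:R$. This makes Topping's inequality, the Gauss-map interpolation, and John's theorem all unnecessary (any point of the inball already serves as the common center of the two balls). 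As written, your proposal proves the statement only for $p\ge n-1$.
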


\begin{prop}\label{SecondFFControl}
Let $n \ge 2$, $p \in (1, \, n]$, $F$ be an elliptic integrand and  $\Sigma$ be a convex, closed hypersurface in $\erre^{n+1}$, satisfying $\Per(\Sigma)=1$. There exist two positive constants $c_1$ and $c_2$, depending only on $n$, $p$ and $F$, such that 
\begin{equation}\label{SecondFFControlEq}
\norm{h}_{L^p(\Sigma)} \le c_1 \norm{S_F}_{L^p(\Sigma)} \le c_2 \coup*{ 1 + \norm{\Sdot_F}_{L^p(\Sigma)} }.
\end{equation}

\end{prop}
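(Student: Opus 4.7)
The plan is to prove the two inequalities separately: the first follows from a pointwise matrix estimate using ellipticity of $F$, while the second reduces to a subtler $L^p$ bound on the anisotropic mean curvature $H_F$.

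For the first inequality, since $F$ is an elliptic integrand and $\esse^n$ is compact, by continuity there exists $\lambda_F=\lambda_F(F)>0$ such that $A^F(\nu)\ge \lambda_F\,\Id$ as quadratic forms for every $\nu\in\esse^n$. Identifying the Weingarten operator $h$ with $d\nu_\Sigma$, the definition $S_F=A^F(\nu_\Sigma)\circ h$ gives, by the cyclic property of the trace and self-adjointness of $A^F$ and $h$,
\[
|S_F|^2 \;=\; \tr\!\bigl((A^F)^2 h^2\bigr) \;\ge\; \lambda_F^2\, |h|^2,
\]
because $\langle e_i,(A^F)^2e_i\rangle \ge \lambda_F^2$ in any orthonormal frame. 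Raising to the power $p/2$ and integrating yields $\norm{h}_{L^p(\Sigma)}\le \lambda_F^{-1}\norm{S_F}_{L^p(\Sigma)}$, so one may take $c_1=\lambda_F^{-1}$.

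For the second inequality, I decompose $S_F = \Sdot_F + (H_F/n)\,g$. Since $\tr_g \Sdot_F = 0$ while $\tr_g g = n$, the two summands are orthogonal in the $g$-Hilbert--Schmidt inner product on endomorphisms, yielding the pointwise identity $|S_F|^2 = |\Sdot_F|^2 + H_F^2/n$. Hence $|S_F|^p \lesssim |\Sdot_F|^p + |H_F|^p$, and the claim reduces to the key bound
\[
\norm{H_F}_{L^p(\Sigma)} \;\le\; C(n,p,F)\bigl(1+\norm{\Sdot_F}_{L^p(\Sigma)}\bigr). \qquad(\star)
\]
Convexity enters essentially here: $h\ge 0$ and $A^F>0$ force all eigenvalues of $S_F$ to be non-negative, making $H_F$ pointwise comparable to the isotropic mean curvature $H$. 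To prove $(\star)$ I would apply the Michael--Simon--Sobolev inequality on $\Sigma$ to a suitable power of $H_F$, together with the anisotropic Codazzi identity expressing $\nabla H_F$ as a divergence involving $\Sdot_F$ (modulo controlled terms). Integration by parts then absorbs the derivative into $\norm{\Sdot_F}_{L^p}$; the normalization $\Per(\Sigma)=1$ keeps the Michael--Simon constant uniform, while the restriction $p\in(1,n]$ matches the critical Sobolev threshold on the $n$-dimensional hypersurface $\Sigma$.

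The main obstacle is precisely the bound $(\star)$. Without any smallness hypothesis on $\Sdot_F$, highly elongated convex hypersurfaces (a thin cylindrical needle with perimeter one, for instance) can make $\norm{H_F}_{L^p}$ arbitrarily large, so the inequality cannot be closed by a perimeter-only estimate on $H_F$. The proof must quantify the fact that in exactly such degenerate geometries the principal curvatures become highly non-umbilical, so $|\Sdot_F|$ grows in proportion to $H_F$ and the term $\norm{\Sdot_F}_{L^p}$ on the right-hand side absorbs the mean curvature contribution in a scale-invariant way; making this balance uniform in the shape of $\Sigma$ is the core technical difficulty.
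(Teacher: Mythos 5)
Your first inequality is fine and is essentially the paper's argument: ellipticity gives a uniform lower bound $A^F\ge \lambda_F \Id$ on the compact sphere, and the pointwise comparison $|S_F|\ge \lambda_F|h|$ (the paper phrases it through the similarity $A^Fh=(A^F)^{1/2}\coup{(A^F)^{1/2}h(A^F)^{1/2}}(A^F)^{-1/2}$, but the content is the same) integrates to $\norm{h}_{L^p}\le c_1\norm{S_F}_{L^p}$.

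The second inequality, however, is not proved: you correctly reduce it to the bound $\norm{H_F}_{L^p(\Sigma)}\le C(1+\norm{\Sdot_F}_{L^p(\Sigma)})$ and then explicitly concede that you cannot close this bound, which is precisely the nontrivial content of the proposition. The route you sketch (Michael--Simon--Sobolev plus the anisotropic Codazzi identity and integration by parts) cannot work as stated: the Codazzi identity $\nabla H_F=\divv_g S_F=\tfrac{n}{n-1}\divv_g\Sdot_F$ only controls the \emph{oscillation} of $H_F$ (this is how it is used elsewhere in the paper, in Proposition 3.4), not its size, so some a priori control on an average of the curvature is indispensable. The paper supplies it by a different, essentially algebraic--topological mechanism. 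By convexity all anisotropic principal curvatures $\kappa_i$ are non-negative, so $|S_F|\le\sum_i\kappa_i$; writing $\sum_i\kappa_i=n\kappa_1+\sum_{i\ge2}(\kappa_i-\kappa_1)$ and using $\norm{\kappa_i-\kappa_j}_{L^p}\le c(n,p)\norm{\Sdot_F}_{L^p}$ reduces everything to the smallest eigenvalue $\kappa_1$. Then H\"older with $\Per(\Sigma)=1$ and $p\le n$ gives $\norm{\kappa_1}_{L^p}\le\norm{\kappa_1}_{L^n}$, non-negativity gives $\kappa_1^n\le\prod_i\kappa_i=\det S_F$, and the decisive point is that the total anisotropic Gauss curvature is a constant independent of $\Sigma$: by the area formula applied to the Gauss map (a diffeomorphism onto $\esse^n$ for convex $\Sigma$),
\begin{equation*}
\int_\Sigma \det S_F=\int_\Sigma \restr{\det A^F}{\nu}\,\det d\nu=\int_{\esse^n}\det A^F=c(n,F).
\end{equation*}
This is exactly the "scale-invariant balance" you identify as the core difficulty, and it is also where the hypotheses $p\le n$ and convexity enter irreducibly; without this step your argument does not establish the claim.
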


\begin{prop}\label{GraphChartThm}
Let $n \ge 2$ and $p \in (1, \, \infty)$ be given. Let $\Sigma$ be a (not necessarily convex) hypersurface in $\erre^{n+1}$. Assume there exists $L>0$ and a graph chart  
\begin{equation}\label{GraphChart}
\phi \daA{B^n_r}{\Sigma},\ \phi(x) = G (x, \, f(x) )
\end{equation}
where $f$ is a Lipschitz function satisfying 
$$\Lip(f) \le L$$ 
and $G \daA{\erre^{n+1}}{\erre^{n+1}}$ is an affine transformation obtained composing a translation and a rotation. Define $q:=\phi(0)$.

Then, for every $0 < \rho \le r$, the geodesic ball $B^g_\rho(q)$ satisfies the inclusion 
\begin{equation}\label{GeoBallinclusion}
\phi\coup*{B^n_{\frac{1}{1+L} \rho}} \subset B^g_\rho(q) \subset \phi \coup*{B^n_\rho}.
\end{equation}
Moreover, there exist a constant $C=C(n, \, p, \, r, \, L)>0$ and a real number $\lambda_q$ such that 
\begin{equation}\label{LocalHFEstimate}
\norm{H_F - \lambda_q}_{L^p \coup{B^g_{r/2}(q)}} \le C \norm{\Sdot_F}_{L^p(\Sigma)}.
\end{equation}
\end{prop}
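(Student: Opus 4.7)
The proof splits into two independent parts: the geodesic-ball inclusion and the local $L^p$ bound on $H_F$.

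\emph{Geodesic-ball inclusion.} In the chart the induced metric reads $g_{ij}=\delta_{ij}+\partial_if\,\partial_jf$, so every tangent vector $v\in\R^n$ satisfies $|v|^2\le g_{ij}v^iv^j\le(1+L)^2|v|^2$. The upper bound applied to the straight segment $t\mapsto\phi(tx)$ yields a curve of $g$-length at most $(1+L)|x|$, whence $\phi(B^n_{\rho/(1+L)})\subset B^g_\rho(q)$. Conversely, the lower bound forces the $g$-length of any $C^1$-curve inside $\phi(B^n_r)$ to dominate the Euclidean length of its $\R^n$-projection; since $\rho\le r$, no $g$-geodesic of length less than $\rho$ emanating from $q$ can exit $\phi(B^n_r)$ (at any first exit point the projection would already have Euclidean length $r\ge\rho$), so $B^g_\rho(q)\subset\phi(B^n_\rho)$.

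\emph{Local $L^p$ estimate via Codazzi and duality.} The backbone of the argument is a distributional first-order identity of the form
\[
\nabla H_F\;=\;\tfrac{n}{n-1}\,\divv_g\Sdot_F+\mathcal R,
\]
obtained by contracting the classical Codazzi symmetry $\nabla_ih_{jk}=\nabla_jh_{ik}$ against $A^F(\nu)$, substituting $S_F=A^F(\nu)\circ d\nu$, and decomposing $S_F=\Sdot_F+(H_F/n)g$. The remainder $\mathcal R$ is algebraic in $h$ and in $DA^F(\nu)$, vanishes identically in the isotropic case $A^F=\Id$, and admits a pointwise bound of the form $|\mathcal R|\le C(L,F)|\Sdot_F|$ after isolating its trace-free structure. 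Let $\lambda_q$ be the $g$-mean of $H_F$ on $B^g_{r/2}(q)$. For $\eta\in L^{p'}(B^g_{r/2}(q))$ of zero mean, Bogovskii's lemma, transferred from $B^n_{r/2}$ to $B^g_{r/2}(q)$ through the bi-Lipschitz chart established in the first step (so that the constants depend only on $n,p,r,L$), produces $\xi\in W^{1,p'}_0(B^g_{r/2}(q))$ with $\divv_g\xi=\eta$ and $\|\nabla\xi\|_{L^{p'}}\le C\|\eta\|_{L^{p'}}$. Pairing the identity with $\xi$, integrating by parts, and invoking H\"older then yields
\[
\left|\int (H_F-\lambda_q)\,\eta\,dV_g\right|\;\le\;\tfrac{n}{n-1}\,\|\Sdot_F\|_{L^p(\Sigma)}\,\|\nabla\xi\|_{L^{p'}}+\int|\mathcal R||\xi|\,dV_g\;\le\;C\|\Sdot_F\|_{L^p(\Sigma)}\|\eta\|_{L^{p'}},
\]
and duality gives \eqref{LocalHFEstimate}.

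\emph{Main obstacle.} The delicate point is the treatment of $\mathcal R$: a naive bound $|\mathcal R|\lesssim|h|\,|DA^F(\nu)|\lesssim|S_F|^2$ is too weak, because it would require an a priori smallness assumption on $\|S_F\|_{L^p}$ in order to absorb the $(H_F/n)g$-contribution into the left-hand side. One must therefore substitute $h=(A^F)^{-1}\big(\Sdot_F+(H_F/n)g\big)$ into the explicit expression for $\mathcal R$, and exhibit the cancellation of the $(H_F/n)g$-piece coming from the symmetry of $A^F(\nu)$, leaving a remainder linear in $\Sdot_F$ with coefficient bounded in terms of $L$ and $F$ only. A minor, routine point is the construction of the Bogovskii solution operator on the curved geodesic ball with constants depending only on $n,p,r,L$, which follows from the bi-Lipschitz comparison of the first part.
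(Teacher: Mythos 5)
Your geodesic-ball inclusion is correct and is essentially the paper's argument (length comparison through the Lipschitz bound on $f$, plus the first-exit observation for the reverse inclusion).

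The second part has a genuine gap at exactly the point you flag as the ``main obstacle'', and the mechanism you propose for closing it is not the right one. Your identity $\nabla H_F=\tfrac{n}{n-1}\divv_g\Sdot_F+\mathcal R$ carries, after contracting the classical Codazzi symmetry of $h$ against $\restr{A^F}{\nu}$ and using the chain rule on $\restr{A^F}{\nu}$, the explicit remainder
\begin{equation*}
\mathcal R_k=\restr{D_qA^i_p}{\nu}\coup*{h^q_i\,h^p_k-h^p_i\,h^q_k},
\end{equation*}
which is genuinely \emph{quadratic} in $h$. No amount of substituting $h=(A^F)^{-1}\bigl(\Sdot_F+\tfrac{H_F}{n}g\bigr)$ and invoking the mere symmetry $A_{ip}=A_{pi}$ produces a bound $\abs{\mathcal R}\le C(L,F)\abs{\Sdot_F}$: the $(H_F/n)^2$ and cross terms survive, and $L$ (a Lipschitz constant of the graph function) gives no pointwise control on $h$. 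The correct and necessary input is that $A^F=D^2F+F\sigma$ is a \emph{Codazzi tensor on the sphere}, i.e.\ $D_qA_{ip}=D_pA_{iq}$; with this symmetry in the derivative indices a relabelling of the dummy indices shows $\mathcal R\equiv 0$, so that $\nabla H_F=\divv_g S_F$ holds exactly (this is the paper's Lemma \ref{AnisoCodazzi}), and hence $\nabla H_F=\tfrac{n}{n-1}\divv_g\Sdot_F$ with no error term. Without isolating and proving this Codazzi property your estimate on $\mathcal R$ is an unproved assertion on which the whole $L^p$ bound rests.

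Granting that identity, your duality/Bogovskii step is a legitimate (and more explicit) route to \eqref{LocalHFEstimate}: the paper instead first shows that in the graph chart the Christoffel contributions cancel, so the equation reads $\partial_kH_F=\tfrac{n}{n-1}\partial_i\Sdot^i_k$ with the \emph{flat} divergence, and then appeals to standard elliptic theory on $B^n_r$, transferring volumes via $dx\le dV\le(1+L)\,dx$. Reducing to the flat divergence avoids constructing a Bogovskii operator on the curved ball, but your version would also work once the chart comparison from the first part is in place.
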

\begin{prop}\label{Local}
Let $(\Sigma, \, g)$ be a closed manifold with unitary isotropic surface energy. Suppose there exists $\gu \in C^\infty(\Sigma)$, $\rho>0$ and $\beta \geq 0$ such that for every $x \in \Sigma$ the following local estimate is satisfied:
\begin{equation}\label{LocalDue}
\norm{\gu - \lambda(x) }_{L^p_g(B_{r}(x))} \le  \beta,
\end{equation}
with $\lambda(x)\in \R$ depending on $x$, and $r \le 2\rho$. Then $\gu$ satisfies the global estimate:
\[
\norm{\gu - \lambda_0 }_{L^p_g(\Sigma)} \le C \beta,
\]
where $\lambda_0 \in \erre$ and $C = C(n, \, p, \, \rho)$ is a positive constant. 
\end{prop}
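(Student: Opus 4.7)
The plan is to upgrade the family of local constants $\{\lambda(x)\}_{x\in\Sigma}$ to a single global constant $\lambda_0$ via a finite-cover chaining argument. First, I would extract a maximal $(\rho/2)$-separated subset $\{x_1,\ldots,x_N\}\subset\Sigma$: by maximality the geodesic balls $B_\rho(x_i)$ cover $\Sigma$, while the balls $B_{\rho/4}(x_i)$ are pairwise disjoint. A volume-comparison estimate for small geodesic balls, together with the normalization $\Per(\Sigma)=1$, bounds $N$ from above by a constant depending only on $n$ and $\rho$.

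Next, I would verify that $\lambda(\cdot)$ varies slowly at scale $\rho$: if $d_g(x_i,x_j)\le\rho$, the intersection $B_\rho(x_i)\cap B_\rho(x_j)$ contains some geodesic ball $B_{\rho/4}(z)$ of uniformly-controlled volume, and applying \eqref{LocalDue} at $x_i$ and at $x_j$ on this common ball together with the triangle inequality in $L^p_g$ yields
\[
|\lambda(x_i)-\lambda(x_j)|\,|B_{\rho/4}(z)|^{1/p}\le \norm{\gu-\lambda(x_i)}_{L^p_g(B_{\rho/4}(z))}+\norm{\gu-\lambda(x_j)}_{L^p_g(B_{\rho/4}(z))}\le 2\beta,
\]
hence $|\lambda(x_i)-\lambda(x_j)|\le C\beta$ with $C=C(n,p,\rho)$. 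Since $\Sigma$ is connected, any two centers can be joined by a chain of at most $N$ consecutive cover balls with overlapping closures; iterating the previous step along the chain and setting $\lambda_0:=\lambda(x_1)$ gives $|\lambda(x_i)-\lambda_0|\le CN\beta$ for every $i$.

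Finally, I would decompose the global integral:
\begin{align*}
\int_\Sigma |\gu-\lambda_0|^p &\le \sum_{i=1}^N \int_{B_\rho(x_i)} |\gu-\lambda_0|^p \\
&\le 2^{p-1} \sum_{i=1}^N \left( \int_{B_\rho(x_i)} |\gu-\lambda(x_i)|^p + |\lambda(x_i)-\lambda_0|^p |B_\rho(x_i)| \right),
\end{align*}
and bound each summand by a $(n,p,\rho)$-constant times $\beta^p$, the first via \eqref{LocalDue} and the second via the previous step together with $\sum_i |B_\rho(x_i)|\le N$ (in fact $\le C\Per(\Sigma)=C$), obtaining $\norm{\gu-\lambda_0}_{L^p_g(\Sigma)}\le C\beta$. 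The only delicate ingredient is the quantitative cover count: both $N$ and the uniform lower bound on $|B_{\rho/4}(z)|$ must be shown to depend solely on $(n,\rho)$, which is where the closedness of $\Sigma$ and the unit-perimeter normalization are essential; with those in hand, the rest of the argument is an exercise in the triangle inequality.
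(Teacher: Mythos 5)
Your covering-and-chaining strategy (maximal separated net, disjoint small balls to count the net, pairwise comparison of the local constants on a common sub-ball, final decomposition of the global integral) is the standard proof of statements of this type; note that the paper itself does not prove Proposition \ref{Local} but cites \cite[Lemma 3.2]{Gioffre2017}, so there is no internal proof to compare against. The genuine problem lies exactly in the step you describe as ``the only delicate ingredient'': the claim that $N$ and the lower bound on $\abs{B_{\rho/4}(z)}$ depend only on $(n,\rho)$, with closedness and $\Per(\Sigma)=1$ as the essential inputs. For an abstract closed Riemannian manifold of unit volume there is \emph{no} lower bound on the volume of a geodesic ball of fixed radius in terms of $n$ and the radius alone, so both the bound $N\le \abs{\Sigma}/\min_i\abs{B_{\rho/4}(x_i)}$ and the inequality $\abs{\lambda(x_i)-\lambda(x_j)}\le 2\beta\,\abs{B_{\rho/4}(z)}^{-1/p}$ are unjustified as stated. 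Indeed, the proposition taken literally is false: consider a unit-area dumbbell made of two spheres of area $1/2$ joined by a tube of length $1$ and circumference $\epsilon$, and let $\gu$ equal $0$ on one sphere and $1$ on the other, interpolated along the tube. If $2\rho<1/2$, every ball $B_{2\rho}(x)$ meets at most one sphere, so one may take $\beta\le C(\rho\epsilon)^{1/p}\to0$ as $\epsilon\to0$, while $\norm{\gu-\lambda_0}_{L^p(\Sigma)}$ remains bounded away from $0$ uniformly in $\lambda_0$.

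The missing ingredient is a uniform lower volume bound for the balls on which \eqref{LocalDue} is assumed, e.g.\ $\abs{B_s(x)}_g\ge c\,s^n$ for $s\le 2\rho$, or at least $\abs{B_{\rho/4}(x)}_g\ge\rho_0>0$ for all $x$. This is precisely what the paper establishes before invoking the proposition, via the Lipschitz graph charts and the inclusion \eqref{GeoBallinclusion}, in the form of estimate \eqref{VolumeEstimate}; so the statement of Proposition \ref{Local} should be read as carrying that hypothesis (the parameter $\rho$ is really playing the role of a volume bound, which is why $C$ is allowed to depend on it). Once that hypothesis is added, your argument closes, modulo two pieces of bookkeeping you should make explicit: you apply \eqref{LocalDue} on subsets of $B_\rho(x_i)$, which requires $r\ge\rho$ rather than the stated $r\le2\rho$, and the comparison step needs consecutive centers at distance at most $\rho$, which you obtain by running the connectedness/chain argument on the radius-$\rho/2$ cover rather than on the radius-$\rho$ one.
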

Proposition \ref{DanielConvexity} can be found in \cite[Prop. 2.7]{Daniel}. Proposition \ref{SecondFFControl} and Proposition \ref{GraphChartThm} are new results and will be proved at the end of this section. Proposition \ref{Local} is proved in \cite[Lemma 3.2]{Gioffre2017}. 

We give now the proof of Theorem \ref{MeanMainThm} using the previous propositions:

\begin{proof}[Proof of Theorem \ref{MeanMainThm}]
Patching together Proposition \ref{DanielConvexity} and Proposition \ref{SecondFFControl} and properly centring $\Sigma$, we are able to find two radii $0<r<R$ depending only $n$, $p$, $F$ and $\delta_0$\footnote{Proposition \ref{SecondFFControl} actually works only for $1<p \le n$. However, the perimeter condition and the H\"older inequality ensure that the super-critical case implies an $L^n$ bound of $\Sdot_F$. }, such that 
\begin{equation}\label{CentredBallinclusion}
B^{n+1}_r \subset U \subset B^{n+1}_R,
\end{equation}
where $U$ is the open set enclosed by $\Sigma$.

We claim the following:
\begin{equation*}
\begin{split}
\mbox{\textbf{CLAIM}:} \quad &  \mbox{For every $q \in \Sigma$ there exists a graph chart}\\
& \phi_q\daA{x \in B^n_r}{G_q(x,f_q(x)) \in \Sigma},\mbox{\, \, as in \eqref{GraphChart},}\\
&\mbox{such that $f_q$ satisfies $\Lip(f_q,B^n_{r/2}) \le \frac{4(R-r)}{r}  =: L(n, \,p, F, \,  \, \delta_0)$. }
\end{split}
\end{equation*}
Assuming the claim is true and plugging it in Proposition \ref{GraphChartThm}, we deduce that for every $q \in \Sigma$ there exist $\lambda_q$ and $C$ depending only on $n$, $p$, $F$ and $\delta_0$ such that \eqref{LocalHFEstimate} holds. Moreover, from inclusion \eqref{GeoBallinclusion} we obtain a constant $\rho_0=\rho_0( \, p, \, F, \, \delta_0)>0$ such that 
\begin{equation}\label{VolumeEstimate}
\Per(B^g_{r_0}(q)) \ge \rho_0.
\end{equation}
Ffrom Proposition \ref{Local} we obtain a real number $\lambda_0$ and a constant $C=C(n, \, p, \, F, \, \delta_0)$ such that 
\begin{equation}\label{controllo}
\norm{H_F - \lambda_0}_{L^p(\Sigma)} \le C \norm{\Sdot_F}_{L^p(\Sigma)}.
\end{equation}
Consequently, we can estimate 
\begin{equation}\label{usata}
\begin{split}
\inf_{\lambda \in \erre} \norm{S_F - \lambda \Id}_{L^p(\Sigma)} 
&\le \norm*{S_F - \frac{\lambda_0}{n} \Id}_{L^p(\Sigma)} = \norm*{\Sdot_F + \frac{1}{n}(H_F - \lambda_0) \Id}_{L^p(\Sigma)} \\
&\le \norm{\Sdot_F}_{L^p(\Sigma)} + C(n,p) \norm{H_F - \lambda_0}_{L^p(\Sigma)} \\
&\le C(n, \, p, \, F, \, \delta_0) \norm{\Sdot_F}_{L^p(\Sigma)},
\end{split}
\end{equation}
which is the desired estimate \eqref{MeanMainEstimatea}.

Now we show how to get rid of the dependence on $\delta_0$, under the assumption that $p \in (1, \, n]$. Indeed, in this regime we have the following dichotomy: either $\norm{\Sdot_F}_p \leq \delta_0\leq 1$ or we can assume that $\norm{\Sdot_F}_p > 1$. In the first case, since $\delta_0$ can be set equal to $1$, we have proved that there exists a constant $C=C(n, \, p, \, F)$ such that 
\[
\min_{\lambda \in \erre} \norm{S_F - \lambda \Id}_{L^p(\Sigma)} \le C \norm{\Sdot_F}_{L^p(\Sigma)}.
\]
Instead, in the latter case $\norm{\Sdot_F}_p > 1$, we obtain the following estimate chosing as particular competitor $\lambda=0$ and applying Proposition \ref{SecondFFControl}:
\begin{align*}
\min_{\lambda \in \erre} \norm{S_F - \lambda \Id}_{L^p(\Sigma)} 
&\le  \norm{S_F}_{L^p(\Sigma)} \overset{\eqref{SecondFFControlEq}}{\le} C(n,p,F) \coup*{1 + \norm{\Sdot_F}_{L^p(\Sigma)} } \\& \le C(n,p,F) \coup*{\frac{1}{\norm{\Sdot_F}_{L^p(\Sigma)} } + 1 }\norm{\Sdot_F}_{L^p(\Sigma)} \\
& \le C(n,p,F) \norm{\Sdot_F}_{L^p(\Sigma)}
\end{align*}
where the constant $C$ does not depend on $\delta_0$.

In order to conclude the proof, we are just left to prove the claim.

 Let $q \in \Sigma$ be given. We can assume $q= - \abs{q} e_{n+1}$, where we have denoted by $\set{e_i}_{i=1}^{n+1}$ the standard basis for $\erre^{n+1}$. Let $\pi$ be the affine hyperplane parallel to $\Span\set{e_1, \, \dots e_n}$ containing $q$. Then we can easily find $\phi_q \daA{B^n_r}{\Sigma}$ given by the composition of the translation $B^n_r \to B^n_r + q \subset \pi $ and the projection from $\pi$ to $\Sigma$. Clearly $\phi_q$ is a graph chart, and we can write $\phi_q(x):=(x, \, f_q(x))$ for every $x \in B^n_r$. By assumption, we have that $f_q$ is a convex function, satisfying $f_q(0)=q$  and $r \le f_q \le R$. Hence, as shown in \cite{Convex}, the Lipschitz constant of $f$ satisfies the estimate: 
\[
\Lip\coup*{f_q, \, B^n_{ \faktor{r}{2} }} \le \frac{4(R-r)}{r},
\]
which gives us the claim and completes the proof.
\end{proof}

Before to conclude this section, we prove Proposition \ref{SecondFFControl} and Proposition \ref{GraphChartThm}. 

\subsection{Proof of Proposition \ref{SecondFFControl}}
Firstly, we notice that a closed convex hypersurface has non-negative anisotropic principal curvatures. Although this result seems to be known, we did not find its proof in the literature. 
\begin{lemma}\label{AnisoNonNegative}
Let $\Sigma$ be a closed hypersurface, and let $\set{\kappa_1, \, \dots \kappa_n}$ be the spectrum of $S_F$. If $\Sigma$ is convex, then $\kappa_i \ge 0$ for every $i=1,\dots,n$.
\end{lemma}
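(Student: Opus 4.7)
The plan is to reduce the statement to the following standard linear algebra fact: if $A$ is a symmetric positive definite operator and $B$ a symmetric positive semi-definite operator on a Euclidean space (with respect to the same inner product), then the composition $AB$, while not necessarily symmetric, has real non-negative spectrum. Applied with $A = A^F|_{\nu_\Sigma(x)}$ and $B = d\nu_\Sigma|_x$, this directly gives $\kappa_i \ge 0$.

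First I would recall that for a closed hypersurface $\Sigma = \partial U$ with $U$ convex, the shape operator $d\nu_\Sigma|_x$ (with $\nu_\Sigma$ the outer unit normal) is symmetric and positive semi-definite with respect to the induced metric $g$: symmetry is the Weingarten identity, and non-negativity is the well known geometric characterization of convexity in terms of the classical isotropic second fundamental form $h(X,X) = \langle d\nu_\Sigma(X),X\rangle$. Next, the ellipticity hypothesis on $F$ gives that $A^F|_{\nu_\Sigma(x)}$, as defined in \eqref{AF}, is symmetric and positive definite on $T_{\nu_\Sigma(x)}\esse^n$. Under the natural identification of both $T_x\Sigma$ and $T_{\nu_\Sigma(x)}\esse^n$ with the Euclidean hyperplane $\nu_\Sigma(x)^\perp \subset \R^{n+1}$, the induced metric $g$ and the round metric $\sigma$ both coincide with the restriction of the Euclidean scalar product, so $A^F$ and $d\nu_\Sigma$ are symmetric with respect to one and the same inner product.

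The key algebraic step is then the similarity
\[
S_F \;=\; A^F \, d\nu_\Sigma \;=\; (A^F)^{1/2}\Bigl((A^F)^{1/2}\, d\nu_\Sigma\,(A^F)^{1/2}\Bigr)(A^F)^{-1/2},
\]
which exhibits $S_F$ as similar to the symmetric operator $M := (A^F)^{1/2}\, d\nu_\Sigma\, (A^F)^{1/2}$. Positive semi-definiteness of $M$ is immediate: for any tangent vector $v$, setting $w = (A^F)^{1/2}v$, one has $\langle M v, v\rangle = \langle d\nu_\Sigma w, w\rangle \ge 0$ by convexity. Since $S_F$ and $M$ have the same spectrum, every eigenvalue of $S_F$ is real and non-negative, which is exactly the desired conclusion.

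No genuine obstacle is expected. The only point that requires a bit of care, rather than a substantive difficulty, is the compatibility of the inner products underlying the symmetry of the two factors; this is settled once both tangent spaces are identified with $\nu_\Sigma(x)^\perp \subset \R^{n+1}$. Everything else is routine linear algebra combined with the standard convexity characterization of the classical shape operator.
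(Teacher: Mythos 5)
Your proposal is correct and follows essentially the same route as the paper: the same similarity $A^F\,d\nu_\Sigma=(A^F)^{1/2}\bigl((A^F)^{1/2}\,d\nu_\Sigma\,(A^F)^{1/2}\bigr)(A^F)^{-1/2}$ and the same positive semi-definiteness computation for the conjugated operator, with convexity supplying $h\ge 0$. Your extra remark on identifying the inner products on $T_x\Sigma$ and $T_{\nu_\Sigma(x)}\esse^n$ is a welcome clarification but does not change the argument, which the paper also records via the identity $g(x)=\sigma(\nu(x))$.
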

\begin{proof}
 We recall that $(S_F)^i_j = (A^F)^i_k h^k_j$, where $A^F$ is positive definite by hypothesis and $h$ is non-negative definite by convexity (see \cite[Prop. 3.2]{Daniel}). 

Let $(A^F)^{\frac{1}{2}}$ be the square root of $A^F$. By standard linear algebra, $(A^F)^{\frac{1}{2}}$ exists, is unique and symmetric, with positive eigenvalues. Then we find
\[
A^F h =(A^F)^{\frac{1}{2}} \coup*{ (A^F)^{\frac{1}{2}} h (A^F)^{\frac{1}{2}} } (A^F)^{-\frac{1}{2}}.
\]
By this simple decomposition, we deduce that $A^F h$ has the same eigenvalues of $(A^F)^{\frac{1}{2}} h (A^F)^{\frac{1}{2}}$. This completes the proof: indeed, for every vector $v\in \R^n$, since $h$ is non-negative definite, we can compute 
\[
\coup*{ (A^F)^{\frac{1}{2}} h (A^F)^{\frac{1}{2}} [v], \, v } = \coup*{  h (A^F)^{\frac{1}{2}} [v], \, (A^F)^{\frac{1}{2}}[v] } \ge 0,
\]
which is the thesis.
\end{proof}
If we look carefully at the proof of Lemma \ref{AnisoNonNegative}, we can also notice that we have found the existence of a constant $c_1=c_1(n, \, p, \, F)$ such that 
\[
\norm{h}_{L^p(\Sigma)} \le c_1 \norm{S_F}_{L^p(\Sigma)}.
\]
In order to conclude the proof of \eqref{SecondFFControl}, we just have to focus on showing the remaining inequality
\begin{equation}\label{SecondFFControlEq2}
\norm{S_F}_{L^p(\Sigma)} \le c_2 \coup*{ 1 + \norm{\Sdot_F}_{L^p(\Sigma)} }.
\end{equation}
This follows by generalizing the isotropic result shown in \cite{Daniel}.
Firstly, we notice that, for every couple of indexes $(i,j)$, we have:
\begin{align*}
\coup*{ \int_\Sigma \abs{\kappa_i - \kappa_j}^p }^{\frac{1}{p}} 
&\le   \coup*{ \int_\Sigma \abs{\kappa_i - H_F}^p }^{\frac{1}{p}}  + \coup*{ \int_\Sigma \abs{\kappa_j - H_F}^p }^{\frac{1}{p}} \le c(n, \, p) \norm{\Sdot_F}_p.
\end{align*}
Consequently, we can estimate: 
\begin{equation}\label{buono}
\begin{split}
\norm{S_F}_{L^p(\Sigma)}
&\le \coup*{ \int_\Sigma \coup*{\sum_{i=1}^n \kappa_i^2 }^\frac{p}{2} }^{\frac{1}{p}} \le \coup*{ \int_\Sigma \coup*{\sum_{i=1}^n \kappa_i }^p }^{\frac{1}{p}} \\
&= \coup*{ \int_\Sigma \coup*{ n \kappa_1 + \sum_{i=2}^n ( \kappa_i - \kappa_1) }^p }^{\frac{1}{p}}\\
& \le c(n, \, p) \coup*{ \coup*{ \int_\Sigma \kappa_1^p }^{\frac{1}{p}} + \norm{\Sdot_F}_p } \\
&\le c(n, \, p) \coup*{ \coup*{ \int_\Sigma \kappa_1^n }^{\frac{1}{n}} + \norm{\Sdot_F}_p } \\
&\le  c(n, \, p) \coup*{ \coup*{ \int_\Sigma \det S_F }^{\frac{1}{n}} + \norm{\Sdot_F}_p } .
\end{split}
\end{equation}
We observe furthermore that the integral of the determinant of $S_F$ does not depend on $\Sigma$ but only on $n$, $p$ and $F$. Indeed, by the area formula (see \cite[Chap. 2.10]{AFP} we find  
\[
\int_{\Sigma} \det S_F = \int_\Sigma \restr{\det A_F }{\nu} \det d\nu = \int_{\esse^n} \det A_F = c(n, \, p, \, F).
\]
Plugging the previous equality in \eqref{buono}, we deduce \eqref{SecondFFControlEq2} and conclude the proof.

\subsection{Proof of Proposition \ref{GraphChartThm}}
First of all we point out that for any convex, closed hypersurface $\Sigma$, the normal vector field $\nu$ is a diffeomorphism between $\Sigma$ and $\esse^n$. By the very definition of $\nu$, we recall that $T_{\nu(x)} \esse^n = \langle \nu(x) \rangle^{\perp} = T_x \Sigma$, and therefore at the point $x$ we find the equality
\begin{equation}\label{gAndsigma}
g(x) := \restr{\delta}{ T_{\nu(x)} \esse^n  }  = \restr{\delta}{ T_x \Sigma }  =: \sigma (\nu(x)).
\end{equation}
Equality \ref{gAndsigma} allows us to raise and lower indexes in the expressions involving both tensors defined on $\esse^n$ and tensors defined on $\Sigma$. This simple observation will be repeatdly used in the next computations.
We first need to prove the following lemma: 
\begin{lemma}\label{AnisoCodazzi}
Let $\Sigma$ be an oriented hypersurface in $\erre^n$. Then, the following expression holds
\begin{equation}\label{AnisoCodazziEq}
\nabla H_F = \divv_g S_F
\end{equation}
\end{lemma}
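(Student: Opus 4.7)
The plan is to reduce the identity $\nabla H_F = \divv_g S_F$ to two symmetry properties: the classical Codazzi identity for the isotropic second fundamental form $h$, and a full symmetry of the covariant derivative of $A^F$ viewed as a three-tensor on $\esse^n$.

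First I fix a point $p\in\Sigma$, work in orthonormal coordinates at $p$, and use the identification $T_{\nu(p)}\esse^n = T_p\Sigma$ recalled in \eqref{gAndsigma} to freely raise and lower indices. Writing $H_F = (A^F)^{jk} h_{jk}$ and $(\divv_g S_F)_i = \nabla_j\bigl[(A^F)^{jk} h_{ki}\bigr]$ and applying the Leibniz rule, each side splits into a ``derivative of $A^F$'' piece and a ``derivative of $h$'' piece. The two derivative-of-$h$ pieces coincide thanks to the classical Codazzi equation $\nabla_i h_{jk} = \nabla_j h_{ik}$ combined with the symmetry of $A^F$, reducing the claim to the equality of the two derivative-of-$A^F$ contributions.

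For the latter, since $A^F$ is pulled back from $\esse^n$ via $\nu$, and since $(\partial_i\nu)^m = h_i^{\ m}$ is the Weingarten operator, the chain rule gives $\nabla_i (A^F)_{jk} = h_i^{\ m}\,(D_m A^F)_{jk}$. The key step is then to show that the three-tensor $T_{mjk} := D_m(A^F)_{jk}$ on $\esse^n$ is fully symmetric in $(m,j,k)$. Symmetry in $(j,k)$ is immediate from $A^F = D^2 F + F\sigma$. Symmetry in $(m,j)$ is the delicate point: expanding $D_m A^F_{jk} - D_j A^F_{mk}$ produces the commutator $[D_m,D_j]D_k F$, which by the curvature identity on $\esse^n$ equals $\sigma_{mk} D_j F - \sigma_{jk} D_m F$; this is precisely cancelled by the difference $D_m F \cdot \sigma_{jk} - D_j F \cdot \sigma_{mk}$ coming from the $F\sigma$ part of $A^F$.

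Once this symmetry of $T$ is established, matching the two remaining contributions $h_i^{\ m}\, T_{m}^{\ jk} h_{jk}$ and $h_j^{\ m}\, T_{m}^{\ jk} h_{ki}$ reduces to a relabeling of dummy indices that invokes only the full symmetry of $T$ and the symmetry of $h$. The main obstacle is therefore the curvature computation above: it is precisely the additive correction $F\sigma$ in the definition of $A^F$ that compensates for the nonzero sectional curvature of $\esse^n$ and produces the complete symmetry of $DA^F$. Alternatively, one may view $A^F$ as the restriction to $T_\nu\esse^n$ of the Euclidean Hessian of the $1$-homogeneous extension of $F$ to $\R^{n+1}\setminus\{0\}$, in which case the full symmetry of $DA^F$ is nothing but the symmetry of third partial derivatives in $\R^{n+1}$; this observation short-circuits the curvature calculation entirely.
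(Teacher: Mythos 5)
Your argument is correct and follows essentially the same route as the paper's proof: apply the Leibniz rule to $\divv_g S_F = \nabla_i\bigl(A^i_p h^p_k\bigr)$, use the chain rule through the Gauss map to convert $\nabla A^F$ into $h\cdot DA^F$, invoke the Codazzi symmetry of $h$ for one piece and the Codazzi symmetry of $A^F$ on $\esse^n$ for the other, and relabel indices. The only difference is that you actually prove the key fact that $DA^F$ is fully symmetric (via the curvature commutator $[D_m,D_j]D_kF=\sigma_{mk}D_jF-\sigma_{jk}D_mF$ cancelling against the $F\sigma$ term, or via the $1$-homogeneous extension), whereas the paper merely asserts that $A^F$ is a Codazzi tensor --- a worthwhile addition.
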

\begin{proof}
For the sake of simplicity, throughout this proof we will omit the depndence of $A^F$ and $S_F$ on $F$, denoting the tensors just with $A$ and $S$. Firstly, we notice that $A$ is a Codazzi tensor, namely it satisfies the symmetry
\[
D_i A_{jk} = D_j A_{ik}.
\]
Using this property, we can compute
\begin{align*}
\divv_g S_{k} 
&= \nabla_i S^i_k = \nabla_i \coup*{\restr{A^i_p}{\nu} h^p_k} =  \nabla_i \coup*{\restr{A^i_p}{\nu} } h^p_k + \restr{A^i_p}{\nu} \nabla_i  h^p_k \\
&= \restr{D_q A^i_p}{\nu} h^q_i h^p_k + \restr{A^i_p}{\nu} \nabla_i  h^p_k = \restr{D_p A^i_q}{\nu} h^q_i h^p_k + \restr{A^i_p}{\nu} \nabla^p  h_{ik} \\
&= \nabla_k \coup*{ \restr{A^i_p}{\nu} h^p_i } = \nabla_k H_F ,
\end{align*}
as desired.
\end{proof}
Proposition \ref{GraphChartThm} now follows simply by expanding equation \eqref{AnisoCodazziEq} in graph charts. Indeed, let $\phi$ be a graph chart, that is 
\[
\phi \daA{B^n_r}{\Sigma},\ \phi(x) = (x, \, f(x)).
\] 
With a straightforward computation, one can find the equality
\[
\Gamma^k_{ij} = v^k \, h_{ij}, \qquad \mbox{ where } v= \frac{\partial f}{\sqrt{1 + \abs{\partial f}^2}}.
\] 
Using the previous expression, we obtain that: 
\begin{align*}
\divv_g S_{ k} 
&= \nabla_i S^i_k  = \partial_i S^i_k + \Gamma_{ip}^i S^p_k - \Gamma^p_{ik} S^i_p = \partial_i S^i_k + v^i h_{ip} S^p_k - v^p h_{ik} S^i_p \\ 
&=   \partial_i S^i_k + v^i h_{ip} A^p_q h^q_k - v^p h_{ik} A^q_p h^i_q = \partial_i S^i_k + v^i A^{pq} \coup* {h_{ip}  h_{qk} - h_{pk}  h_{qi} } \\
&= \partial_i S^i_k + v^i h_{ip}  h_{qk} \coup*{A^{pq} - A^{qp}} = \partial_i S^i_k. 
\end{align*}
This computation shows that in graph chart, equation \eqref{AnisoCodazziEq} can be written in the simple form:
\begin{equation}\label{AnisoCodazziChart}
\partial_k H_F = \partial_i S^i_k.
\end{equation}
By \eqref{AnisoCodazziChart} we immediately obtain 
\begin{equation}\label{AnisoCodazziChartDot}
\partial_k H_F = \frac{n}{n-1} \partial_i \Sdot^i_k \qquad \mbox{ in } B^n_r.
\end{equation}
Equation \eqref{AnisoCodazziChartDot} has been intensively studied in the literature and we can conclude from it (see for instance \cite{GilTrud}) that there exists a $\lambda$ such that 
\begin{align*}
\norm{H_F - \lambda}_{L^p_\delta(B^n_{\faktor{r}{2}})} 
&\le C(n, \, p, \, r) \norm{ \Sdot_F }_{ L^p_\delta (B^n_r) }\\
&\le C(\Lip(f), \, n, \, p, \, r) \norm{ \Sdot_F }_{ L^p_g (B^n_r) }  
\le C \norm{\Sdot_F}_p,
\end{align*}
which is the desired estimate in the flat volume $dx$. We now prove inclusion \eqref{GeoBallinclusion}. The proof is essentially taken from \cite[Lemma 1.7]{Daniel}, therefore we just give a sketch of it. Let us denote by $d_g$ the geodesic distance associated to the metric $\phi^* g$ in the ball $B^n_r$. We prove that, for every $0<\rho< r$, the following inclusion holds:
\begin{equation}\label{GeoInclusion}
\partial B^g_\rho \subset \overline{B}^n_\rho \setminus B^n_{\frac{\rho}{1 + L}}.
\end{equation}
Indeed, let $y \in \partial B^g_\rho $. We have $d_g(0, \, y) = \rho$ by definition, and moreover the following inequality holds:
\[
d_g(0, \, y) \ge \abs{(y, \, f(y) - (0, \, 0)} \ge \sqrt{\abs{y}^2 + f(y)^2} \ge \abs{y},
\]
which proves $\abs{y} \le \rho$. In order to prove the other inclusion, let us define $\gamma(t):= (ty, \, f(ty))$. Recalling that $f$ is $L$-Lipschitz, we obtain 
\begin{align*}
d_g( 0, \, y) 
&\le \int_0^1 \abs{\dot{\gamma}(t)} \, dt = \int_0^1 \sqrt{\abs{y}^2 + \coup*{ \restr{\partial f}{ty}, \, y  }^2 } \, dt \\
&\le \int_0^1 \sqrt{\abs{y}^2 \coup*{1 +  \abs{\restr{\partial f}{ty}}^2}} \, dt \le (1+L)\abs{y},
\end{align*}
which proves the other inclusion. We complete the theorem showing how to pass from the flat volume $dx$ to the  measure $dV$ associated to $(\Sigma, \, g)$. Indeed it is easy to show that in graph charts we have the expression
\[
dV = \sqrt{1 + \abs{\partial f}^2} \, dx,
\]
hence we obtain 
\[
dx \le dV \le (1 + L) \, dx
\]
and the thesis.

\section{The qualitative result}\label{Qual}
In this section we show the first qualitative, closeness theorem of this paper. Although this result is not optimal, it is the key point for our further study.
\begin{teo}\label{QualitativeAppThm}
Let $n \ge 2$, $p \in (1, \, \infty)$ be given, and let $F$ be an elliptic integrand. For every $0<\epsilon<1$ there exists $0<{\delta=\delta(n, \, p, \, F, \, \epsilon)} < 1$ with the following property.

If $\Sigma$ is a closed, convex hypersurface satisfying 
\begin{itemize}
\item[$(i)$] the perimeter condition: $\displaystyle \Per(\Sigma) = \Per(\W) $ 
\end{itemize}
and 
\begin{itemize}
\item[$(ii)$] the pinching condition:  $\displaystyle \int_\Sigma \abs{\Sdot_F}^p \, dV \le \delta$ ,
\end{itemize}
then there exists $c \in \erre^n$ such that 
\begin{equation}\label{C0CloseEqq}
d_{\hd}\coup*{\Sigma, \, \W + c} \le \epsilon
\end{equation}
\end{teo}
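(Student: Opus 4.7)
My strategy is a compactness-and-contradiction argument. The plan is to assume the statement fails and extract a subsequence converging in Hausdorff distance via Blaschke's theorem, then identify the limit as a translate of $\W$ by routing through the anisotropic isoperimetric deficit of Theorem \ref{DeficiThm}. Specifically, suppose there exist $\epsilon_0 \in (0,1)$ and a sequence of closed, convex hypersurfaces $\Sigma_k$ with $\Per(\Sigma_k) = \Per(\W)$ and $\int_{\Sigma_k}|\Sdot_F|^p \, dV \to 0$, yet $d_{\hd}(\Sigma_k, \W + c) > \epsilon_0$ for every $c \in \R^{n+1}$ and every $k$.

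First I would extract uniform geometric control. By Proposition \ref{SecondFFControl} we have $\|h_k\|_{L^p(\Sigma_k)} \leq c_2(1 + \|\Sdot_F\|_{L^p(\Sigma_k)})$, which is uniformly bounded (in the super-critical regime $p > n$, H\"older's inequality with the fixed perimeter reduces to the $L^n$-case). Proposition \ref{DanielConvexity}, after rescaling to unit perimeter, then provides translations $\tau_k \in \R^{n+1}$ and radii $0 < r < R$ independent of $k$ such that $B^{n+1}_r \subset \tau_k + U_k \subset B^{n+1}_R$, where $U_k$ is the open set enclosed by $\Sigma_k$. Blaschke selection then yields a subsequence along which $\tau_k + U_k$ converges in Hausdorff distance to a convex body $U_\infty$ satisfying $B^{n+1}_r \subset U_\infty \subset B^{n+1}_R$; consequently $\tau_k + \Sigma_k \to \partial U_\infty$ in Hausdorff distance.

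Next I would identify $\partial U_\infty$ as a translate of $\W$ by showing that the anisotropic deficit of $U_k$ vanishes in the limit. Theorem \ref{MeanMainThm} provides constants $\lambda_k \in \R$ with $\|S_F - \lambda_k \Id\|_{L^p(\Sigma_k)} \to 0$; taking traces gives $\|H_F - n\lambda_k\|_{L^p(\Sigma_k)} \to 0$. The topological identity $\int_{\Sigma_k}\det S_F \, dV = c(n,F)$, established in the proof of Proposition \ref{SecondFFControl}, together with $S_F$ being $L^p$-close to $\lambda_k \Id$, forces $\lambda_k$ to remain bounded and bounded away from $0$. Applying the anisotropic Minkowski-type identity $n\PF(\Sigma_k) = \int_{\Sigma_k} H_F\, \langle x, \nu_k\rangle \, dV$ together with $\int_{\Sigma_k} \langle x, \nu_k\rangle \, dV = (n+1)|U_k|$ from the divergence theorem, I obtain $\PF(\Sigma_k) - \lambda_k (n+1)|U_k| \to 0$. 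Comparing with the Wulff value $\PF(r_k\W)$, where $r_k$ is chosen so that $|r_k U_\W| = |U_k|$, shows that the deficit $\delta(U_k)$ defined in \eqref{Deficit} vanishes in the limit. Theorem \ref{DeficiThm} then produces vectors $c_k \in \R^{n+1}$ with $|U_k \Delta (r_k U_\W + c_k)| \to 0$; the perimeter constraint $\Per(\Sigma_k) = \Per(\W)$ passes to the limit by continuity of perimeter under Hausdorff convergence of convex bodies, forcing $r_k \to 1$. A standard upgrade from symmetric-difference closeness to Hausdorff closeness for uniformly bounded convex bodies then contradicts the starting hypothesis $d_{\hd}(\Sigma_k, \W + c) > \epsilon_0$.

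The main obstacle is the third paragraph: converting the $L^p$-pinching of $S_F$ near $\lambda_k \Id$ into a genuine closeness of $\PF(\Sigma_k)$ to the isoperimetric value, so that Theorem \ref{DeficiThm} can be applied. This requires a careful use of the anisotropic Minkowski identity to turn pointwise control of $H_F$ into integral control of $\PF$, together with a tight identification of $\lambda_k$ and $r_k$ so that the two quantities to be compared are genuinely the same Wulff perimeter; the remaining passages (deficit control, symmetric-difference bound, and Hausdorff upgrade on convex bodies) are then routine.
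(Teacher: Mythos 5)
Your compactness setup (contradiction hypothesis, Proposition \ref{DanielConvexity}, Blaschke selection, and the extraction of $\lambda_k$ with $\|H_F-n\lambda_k\|_{L^p(\Sigma_k)}\to 0$) matches the paper's and is sound, as is the final upgrade from symmetric-difference to Hausdorff closeness for uniformly rounded convex bodies. The problem sits exactly at the step you yourself flag as the ``main obstacle'', and it is a genuine gap, not a routine computation. The Minkowski identity plus the divergence theorem do give $\PF(\Sigma_k)=\lambda_k(n+1)|U_k|+o(1)$ (the error is fine, since $|\langle x,\nu_k\rangle|\le R$ and $\|H_F-n\lambda_k\|_{L^1}\to0$). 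But the deficit compares $\PF(\Sigma_k)$ with $(n+1)|U_\W|^{1/(n+1)}|U_k|^{n/(n+1)}$, and the isoperimetric inequality already gives $\PF(\Sigma_k)\ge(n+1)|U_\W|^{1/(n+1)}|U_k|^{n/(n+1)}$; combined with Minkowski this yields only the one-sided bound $\lambda_k\ge(|U_\W|/|U_k|)^{1/(n+1)}+o(1)$. To get $\delta(U_k)\to0$ you need the reverse inequality, i.e.\ an upper bound on the almost-constant value of $H_F$ in terms of the enclosed volume. That is an Alexandrov-type estimate --- in effect an anisotropic Heintze--Karcher inequality with an $L^p$ error term, which is the engine of \cite{DMMN} --- and neither the Minkowski identity nor the identity $\int_{\Sigma_k}\det S_F=c(n,F)$ supplies it (the latter would in any case require $L^n$ control of $S_F-\lambda_k\Id$, unavailable when $p<n$). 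As written, ``comparing with the Wulff value $\PF(r_k\W)$'' is circular: the assertion that $\PF(\Sigma_k)$ is close to $\PF(r_k\W)$ \emph{is} the vanishing of the deficit.

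For comparison, the paper avoids the isoperimetric deficit entirely. It writes each $\Sigma^k$ as a radial graph $e^{f_k(x)}x$ over $\esse^n$, converts the $L^p$ bound on $S_F^k-\lambda\Id$ into an $L^p$ bound on $h^k$ by inverting $A^F$, uses the explicit graph expression for $h^k$ and elliptic estimates to get $\sup_k\|f_k\|_{W^{2,p}(\esse^n)}<\infty$, passes to a $W^{2,p}$ limit solving $S_F=\lambda\Id$ weakly, bootstraps to smoothness, and then invokes the rigidity Theorem \ref{AnisoSecondFFTheorem}. If you wish to keep your route through Theorem \ref{DeficiThm}, you must import a quantitative Heintze--Karcher/Alexandrov inequality to close the deficit step; otherwise the identification of the limit should be done by a regularity-plus-rigidity argument of the paper's type.
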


\begin{proof}
We argue by contradiction ad assume there exist $\epsilon_0>0$ and a sequence of closed, convex hypersurfaces $\set{\Sigma^k}_{k \in \N}$ satisfying 
\begin{itemize}
\item[$(i)'$]  $\displaystyle \Per(\Sigma^k) = \Per(\W) $ 
\item[$(ii)'$] $\displaystyle \lim_k \norm{\Sdot^k_F}_{L^p(\Sigma^k)}=0$
\item[$(iii)'$] $\displaystyle d_{\hd}\coup*{\Sigma^k, \, \W + c } \ge \epsilon_0$ for every $c \in \R^{n+1}$ and $k \in \N$.
\end{itemize}
We will denote by $h^k$,$S_F^k$, $\Sdot^k_F$, $H_F^k$ and $\nu^k$ respectively the second fundamental form, the anisotropic second fundamental form, its trace-free part, the anisotropic mean curvature and the normal vector field associated to the hypersurfaces $\Sigma^k$.
We notice that from condition $(i)'$, condition $(ii)'$ and Proposition \ref{DanielConvexity}, we are able to find two radii $0<r<R$, depending only on $n$, $p$ and $F$\footnote{In the super-critical case we can assume that up to extract a subsequence, every $\norm{\Sdot^k_F}_p$ is bounded by $1$, hence removing the depence on $c_0$ for the qualitative argument. }, such that inclusion  \eqref{Ballinclusion} holds. We centre every $\Sigma^k$ so that \eqref{CentredBallinclusion} holds.  For every $k$, let $U^k$ be the convex bounded set enclosed by $\Sigma^k$. From the Blaschke's selection theorem (see \cite[Thm. 1.8.6]{Schneider}), we infer that, up to estract a subsequence, we can assume that $\overline{U}^{(k)} \to V$ in the Hausdorff distance $d_{\hd}$. From inclusion \eqref{CentredBallinclusion} we infer that the volumes $U^k$ do not converge to $0$, hence $V$ has positive measure and necessarily it has the form $V = \overline{U}$ for some $U$ open, bounded and convex set.

Let $\Sigma$ be the boundary of $U$. From the discussion above, we easily notice that $\Sigma^k$ converges to $\Sigma$ in the Hausdorff distance. Plugging this information in $(iii)'$, we deduce that $\displaystyle d_{\hd}\coup*{\Sigma, \, \W + c } \ge \epsilon_0$ for every $c \in \R^{n+1}$. If we show that $\Sigma$ is a Wulff shape, we obtain the desired contradiction. 

As shown in equation \eqref{controllo}, there exists a sequence $(\lambda_k)_k\subset \R$ such that
\begin{equation}\label{stima}
\|H_F^k-\lambda_k\|_{L^p(\Sigma^k)}\leq  C\|\Sdot_F^k\|_{L^p(\Sigma^k)}\to 0.
\end{equation}
Moreover, by Proposition \ref{SecondFFControl} we know that for $k$ large enough
$$\|H_F\|_{L^p(\Sigma^k)}\leq c_2 \coup*{ 1 + \norm{\Sdot_F}_{L^p(\Sigma)} } \leq 2c_2.$$
It follows that 
$$|\lambda_k|=\frac{\|\lambda_k\|_{L^p(\Sigma^k)}}{\Per(\Sigma^k)^\frac{1}{p}}\leq C(\|H_F^k\|_{L^p(\Sigma^k)}+\|H_F^k-\lambda_k\|_{L^p(\Sigma^k)}) \leq C.$$
We conclude that there exists $\lambda \in \erre$ such that, up to subsequences, $\lambda_k \to \lambda$. Therefore, we can assume we are given a sequence $\{ \Sigma^k \}$ satisfying the following properties.
\begin{itemize}
\item[$(i)''$] $\Sigma^k= \partial U^k$, with $U$ being a convex, open, bounded set satisfying $B_r \subset U^k \subset B_R$ .
\item[$(ii)''$] There exists $\Sigma= \partial U$ such that $d_{\hd}\coup*{\Sigma^k, \, \Sigma} \to 0$.
\item[$(iii)''$] $\norm{S_F^{k} - \lambda \Id}_{L^p(\Sigma^k)} \to 0$ for some $p \in (1, \, \infty)$.
\end{itemize}
We show how these three conditions imply $\Sigma$ to be the Wulff shape. Firstly, condition $(i)''$ allows us to give the following radial parametrization for every $k$:
\begin{equation}\label{RadialPar}
\psi_k \daA{\esse^n}{\Sigma^k}, \qquad \psi_k(x) = e^{f_k(x)} \, x.
\end{equation}
Clearly, $\psi_k$ is a smooth parametrization for every $k$. By condition $(i)''$ and convexity, it is easy to see that every $f_k$ satisfies 
\begin{equation}\label{PropertiesF}
\log(r) \le f_k \le \log (R), \qquad \Lip(f_k) \le L=L(r, \, R).
\end{equation}
Moreover, by condition $(ii)''$, we find that $f_k$ converges in $C^0$ to a function $f$ satisfying \eqref{PropertiesF} and such that 
\[
\psi \daA{\esse^n}{\Sigma}, \qquad \psi(x) = e^{f(x)} \, x
\]
is a Lipschitz parametrisation for $\Sigma$. We can improve the regularity of $f$. Indeed, by condition $(iii)''$, we can write 
\[
S_F^k = \lambda \Id + \mathcal{R}_k, \qquad \mbox{ where } \norm{\mathcal{R}_k}_{L^p(\Sigma^k)} \to 0.
\]
We know that $S_F^k=A^F \circ d\nu^k$, where $A^F$ is the smooth $2$-covariant tensor defined on the sphere as in \eqref{AF} and such that $0<A^F$. Multiplying by the inverse of $A^F$ and taking the $L^p$ norm, we obtain the estimate 
\begin{equation}\label{EllipticEstimate}
\norm{h^k}_{L^p(\Sigma^k)} \le \norm{(A^F)^{-1}}_{C^0} \coup*{\lambda + \norm{\mathcal{R}_k}_{L^p(\Sigma^k)}}.
\end{equation}
We exploit the fact that we have $\psi^k$ as a global chart. The computations shown in \cite[Lemma 3.1]{Gioffre2016} give us the following expression for $h^k$ in this chart:
\begin{equation}\label{hExpression}
(h^k)^i_j= \frac{e^{-f_k}}{\sqrt{1 + \abs{Df_k}^2}} \coup*{ \delta^i_j - D^i D_j f_k + \frac{D^i f_k}{1 + \abs{D f_k}^2} D^p D_j f_k \, D_p f_k }.
\end{equation}
Plugging expression \eqref{hExpression} into \eqref{EllipticEstimate}, we easily obtain 
\begin{equation}\label{EllipticEstimateDue}
\sup_k \norm*{\divv_\sigma \coup*{ \frac{D f_k}{\sqrt{1 + {Df_k}^2}} }}_{L^p(\esse^n)} < + \infty.
\end{equation}
Since $f_k$ satisfies \eqref{PropertiesF}, standard regularity theory gives
\begin{equation}
\sup_k \norm{f_k}_{W^{2, \, p}(\esse^n)} < + \infty.
\end{equation}
So the limit $f$ must be in $W^{2, \, p}(\esse^n)$ and hence the limit $\Sigma$ is a rough hypersurface with $W^{2, \, p}$ regularity. Since the second weak derivatives have meaning, by condition $(iii)''$ we know that the (weak) anisotropic second fundamental form $S_F$ of $\Sigma$ satisfies weakly the equation 
\begin{equation}\label{Final}
S_F = \lambda \Id.
\end{equation}
Using the expression \eqref{hExpression} for the second fundamental form $h$ of $\Sigma$ and taking the trace, we obtain that $f$ satisfies \eqref{PropertiesF} and the differential equation
\begin{equation}\label{EllipticRegularityTre}
\divv_\sigma \coup*{\frac{Df}{\sqrt{1 + \abs{Df}^2}}} = \lambda \, \tr \restr{A^F}{\nu} + \frac{n e^{-f}}{\sqrt{1 + \abs{D f}^2}} =: h(f, \, Df).
\end{equation}
By standard elliptic regularity theory (see \cite{GilTrud}), we obtain that the hypersurface is smooth, and necessarily equality \eqref{Final} holds in the classical sense. Hence, by Theorem \ref{AnisoSecondFFTheorem}, $\Sigma$ must be the Wulff shape.
\end{proof}

\section{The quantitative result}\label{Quan}
Before stating the main theorem of this section, we define the parametrization we will work with and its properties. 
Let $\Sigma$ be a closed, convex hypersurface in $\erre^{n+1}$ satisfying the closeness condition \eqref{C0CloseEqq}, and let $B_\epsilon(\W)$ be the tubular neighbourhood associated to $\W$, that is the set
\begin{equation}\label{Tubular}
B_\epsilon(\W):= \set{ z \in \R^{n+1} \mid z= x+ \rho \nu(x), \quad \forall  x \in \W, \, 0 \le \rho <\epsilon   }.
\end{equation}
All the details details needed on the tubular neighbourhood can be found in \cite[Chap. 5]{Hirsch}. We recall that  for $\epsilon$ sufficiently small,  $B_r(\W)$ is an open, bounded set with smooth boundary diffeomorphic to $\W$, for every $r<\varepsilon$. By the very definition, if $\Sigma$ is $\epsilon$ close to $\W$ in the Hausdorff distance, then it is contained in the tubular neighbourhood $B_\epsilon(\W)$. Assume this condition holds: then we can give the following parametrization for $\Sigma$
\begin{equation}\label{RadialPar1}
\psi \daA{\W}{\Sigma},\ \psi(x)= x + u(x) \nu(x),  \qquad \mbox{ for some } u \in C^\infty(\W).
\end{equation}
 Clearly $\psi$ is a smooth diffeomorphism. We call $\psi$ \textit{radial parametrization of} $\Sigma$ and $u$ \textit{radius} of $\Sigma$. 
 
 We have all the notation to state the main result of this section:
\begin{teo}\label{MainEstimateb}
Let $\Sigma$ be a closed, convex, radially parametrized hypersurface in $\erre^{n+1}$ as in \eqref{RadialPar1}, satisfying the perimeter condition:
$$\Per(\Sigma) = \Per(\W),$$ 
 and having radius $u$ verifying
 \[
\norm{u}_{C^0} \le \epsilon, \quad \norm{\nabla u }_{C^0} \le C\sqrt{\epsilon} .
\] Then there exists a constant $C=C(n, \, p, \, F)>0$ such that 
 \begin{equation}\label{MainEstimateEq}
 \inf_{c \in \erre^{n+1}} \norm{u - \phi_c}_{W^{2, \, p}(\W)} \le C \coup*{ \norm{\Sdot_F}_{L^p(\Sigma)} + \sqrt{\epsilon} \norm{u}_{W^{2, \, p}(\W)} },
 \end{equation}
 where we have denoted by $\phi_c$ the linear projection of $\nu$ by the vector $c$, that is 
 \begin{equation}\label{PhiDef}
 \phi_c \daA{\W}{\R},\quad \phi_c(y) := \langle c, \, \nu(y)\rangle .
 \end{equation}
\end{teo}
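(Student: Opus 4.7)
The plan is to linearize the anisotropic second fundamental form around the Wulff shape, apply the oscillation theorem of Section \ref{OT} to control this linearization, and then close the estimate by an elliptic regularity argument for the resulting anisotropic umbilical operator on $\W$.

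First I would carry out a careful first-order expansion of $S_F$ along the radial parametrization $\psi(x) = x + u(x)\nu(x)$. Since $S_F \equiv \Id$ on $\W$ by the identity that makes $\W$ the distinguished solution of Theorem \ref{AnisoSecondFFTheorem}, a direct (if tedious) computation of the induced metric, the unit normal to $\Sigma$, its differential, and the factor $A^F$ evaluated at $\nu_\Sigma$ yields an expansion of the schematic form
\[
S_F(\psi(x)) = \Id_{T_x\W} + \mathcal{L}_F(u)(x) + Q(u,\nabla u,\nabla^2 u)(x),
\]
where $\mathcal{L}_F$ is a linear, second-order elliptic operator on $\W$ (morally $-A^F \nabla^2 u - u\,\Id$ modulo anisotropic lower-order corrections) and $Q$ is a remainder with the pointwise bound $|Q| \lesssim (|u|+|\nabla u|)\,(|u|+|\nabla u|+|\nabla^2 u|)$. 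The assumptions $\|u\|_{C^0}\le \epsilon$ and $\|\nabla u\|_{C^0}\le C\sqrt\epsilon$ then force
\[
\|Q\|_{L^p(\W)} \le C\sqrt\epsilon\,\|u\|_{W^{2,p}(\W)}.
\]

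Next, Theorem \ref{MeanMainThm} applied to $\Sigma$ produces a real number $\lambda$ such that $\|S_F - \lambda\Id\|_{L^p(\Sigma)} \le C\|\Sdot_F\|_{L^p(\Sigma)}$. The Jacobian of $\psi$ is $1+O(\sqrt\epsilon)$, so $L^p$ norms on $\Sigma$ and $\W$ are comparable up to a factor $1+O(\sqrt\epsilon)$, and combining this with the preceding expansion produces
\[
\|\mathcal{L}_F(u) - (\lambda-1)\Id\|_{L^p(\W)} \le C\bigl(\|\Sdot_F\|_{L^p(\Sigma)} + \sqrt\epsilon\,\|u\|_{W^{2,p}(\W)}\bigr).
\]
Taking the trace-free part on both sides kills the $(\lambda-1)\Id$ contribution and yields the analogous estimate for $\mathcal{L}_F^\circ(u)$, the trace-free part of $\mathcal{L}_F(u)$.

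The final ingredient is an elliptic estimate on the compact smooth hypersurface $\W$ for $\mathcal{L}_F^\circ$: its kernel in $W^{2,p}(\W)$ equals $\{\phi_c : c\in\R^{n+1}\}$, and standard Fredholm/Calderón--Zygmund theory on a closed manifold then supplies
\[
\inf_{c \in \R^{n+1}}\|u - \phi_c\|_{W^{2,p}(\W)} \le C\|\mathcal{L}_F^\circ(u)\|_{L^p(\W)}.
\]
Chaining this with the bound of the previous paragraph gives \eqref{MainEstimateEq}. The main obstacle is the exact identification of the kernel of $\mathcal{L}_F^\circ$: the inclusion $\{\phi_c\}\subset \ker\mathcal{L}_F^\circ$ is a direct check, since translations of $\W$ remain totally anisotropic-umbilical Wulff shapes and differentiating this one-parameter family in $c$ recovers $\mathcal{L}_F^\circ(\phi_c)=0$, but the reverse inclusion is an infinitesimal version of the anisotropic umbilical rigidity. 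I expect to extract it either by spectral analysis of $\mathcal{L}_F^\circ$ viewed as an anisotropic analogue of the Obata operator $\nabla^2 u+u\sigma$ on $\esse^n$ (whose traceless kernel is precisely $\{\langle c,\nu\rangle\}$) or by a compactness argument that pulls back the full nonlinear rigidity of Theorem \ref{AnisoSecondFFTheorem}.
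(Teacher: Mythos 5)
Your overall architecture (linearize $S_F$ along the radial graph, feed in the oscillation theorem, invert the linearized operator modulo its kernel) is the same as the paper's, and the linearization step with remainder $O(\sqrt{\epsilon}\,\norm{u}_{W^{2,p}})$ is exactly Proposition \ref{GeomApp}. The genuine gap is in your last step. You pass to the \emph{trace-free part} $\mathcal{L}_F^\circ$ of the linearized operator and assert that its kernel is exactly $\{\phi_c\}$, to be established ``by spectral analysis'' or ``by compactness.'' Neither route is available as stated: in the Gauss-map coordinates the condition $\mathcal{L}_F(u)=\mu\,\Id$ for some function $\mu$ becomes $D^2f+f\sigma=\mu A^F$, an equation whose solution set genuinely depends on the anisotropy $A^F$ and is not the Obata kernel; the paper itself concedes (Appendix B) that it cannot fully characterize the kernel of even the scalar stability operator, and a compactness argument pulling back Theorem \ref{AnisoSecondFFTheorem} would give rigidity for $S_F=\lambda\Id$, not for the linearized trace-free condition. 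So the coercivity estimate $\inf_c\norm{u-\phi_c}_{W^{2,p}}\le C\norm{\mathcal{L}_F^\circ(u)}_{L^p}$ is precisely the unproved claim on which your proof rests.

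The paper avoids this by never taking the trace-free part of the operator. It keeps the full linearization $\tilde{L}[u]=\nabla(A^F\nabla u)+u\,h$ and controls its trace contribution by showing, via \eqref{curvatura}, that $\overline{H_F(\Sigma)}$ is within $C\sqrt{\epsilon}\,\norm{u}_{W^{2,p}}$ of $\overline{H_F(\W)}$; combined with \eqref{usata} this pins the constant $\lambda$ of the oscillation theorem to the Wulff value and yields $\norm{\tilde{L}[u]}_{L^p}\le C(\norm{\Sdot_F}_{L^p}+\sqrt{\epsilon}\norm{u}_{W^{2,p}})$ with no trace-free projection. The kernel of the \emph{full} $\tilde{L}$ is then computed exactly: writing $u=f\circ\nu$ and using $\restr{A^F}{\nu}h_\W=\Id$, one gets $A^F\tilde{L}[f\circ\nu]=D^2f+f\sigma$ pulled back from $\esse^n$, so invertibility of $A^F$ reduces the coercivity statement to the known sphere estimate $\norm{\Delta f+nf}_{L^p(\esse^n)}\gtrsim\inf_c\norm{f-\Phi_c}_{W^{2,p}(\esse^n)}$ of \cite[Theorem 3.10]{Gioffre2016}. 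This Gauss-map conjugation, which transports the anisotropic operator to the standard Obata-type operator on the round sphere, is the idea your proposal is missing; without it (or a genuine proof of your kernel claim) the argument does not close.
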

 
Before proving Theorem \ref{MainEstimateb}, we state two computational propositions, whose proof is postponed to Appendix \ref{APPA}: 

\begin{prop}\label{GeomApp}
Let $\Sigma$ be a closed, radially parametrized hypersurface as in \eqref{RadialPar1}, with radius satisfying the inequalities
\[
\norm{u}_{C^0} \le \epsilon, \quad \norm{\nabla u }_{C^0} \le C\sqrt{\epsilon} .
\]
There exists a constant $C=C(\W)$ such that the following inequalities hold:
\begin{align}
\abs{g_{ij} - \omega_{ij} - 2u h_{ij}} &\le C \sqrt{\epsilon}  \coup*{\abs{u} + \abs{\nabla u}} \label{gApp}  \\
\abs{g^{ij} - \omega^{ij} - 2u h^{ij}} &\le C \sqrt{\epsilon}  \coup*{\abs{u} + \abs{\nabla u}} \label{gInvApp} \\
\abs{\det g - \det \omega} &\le C \sqrt{\epsilon} \coup*{ \abs{u} + \abs{\nabla u}} \label{detApp} \\
\abs{\nu_\Sigma - \nu + \nabla u}    &\le  C \sqrt{\epsilon} \coup*{\abs{u} + \abs{\nabla u}} \label{nuApp} \\
\abs{h_\Sigma - h_\W + \nabla^2 u - h^2_\W u} &\le  C \sqrt{\epsilon} \coup*{\abs{u} + \abs{\nabla u} + \abs{\nabla^2 u }} \label{hApp} \\
|S_F(\Sigma)-S_F(\W)+\nabla(A^F\nabla u)-h_\W u|&\le  C \sqrt{\epsilon} \coup*{\abs{u} + \abs{\nabla u} + \abs{\nabla^2 u }} \label{secondaforma} \\
|\overline{H_F(\Sigma)}-\overline{H_F(\W)} |&\leq C \sqrt{\epsilon} \coup*{\abs{u} + \abs{\nabla u} + \abs{\nabla^2 u }} \label{curvatura} .
\end{align}
\end{prop}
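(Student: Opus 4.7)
The strategy is a direct first-order Taylor expansion of each geometric quantity of $\Sigma$ around the corresponding one on $\W$. The key observation is that under the hypotheses $\|u\|_{C^0}\le\epsilon$ and $\|\nabla u\|_{C^0}\le C\sqrt{\epsilon}$ (with $\epsilon\le 1$), any scalar expression at least quadratic in $(u,\nabla u)$, such as $u^2$, $u|\nabla u|$, or $|\nabla u|^2$, is automatically bounded by $C\sqrt{\epsilon}(|u|+|\nabla u|)$; analogously, a product of one factor from $\{u,\nabla u\}$ with one factor of $\nabla^2 u$ is bounded by $C\sqrt{\epsilon}(|u|+|\nabla u|+|\nabla^2 u|)$. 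So it suffices to write out each expansion, identify the linear part, and recognize that every remaining term has this quadratic structure.

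For the metric, I would differentiate $\psi(x)=x+u(x)\nu(x)$ and use the Weingarten relation $\partial_i\nu=-(h_\W)_i^k\partial_k$ on $\W$ to get $\partial_i\psi=(\delta_i^k-u(h_\W)_i^k)\partial_k+(\partial_i u)\nu$. Taking Euclidean inner products yields the exact identity $g_{ij}=\omega_{ij}-2u(h_\W)_{ij}+u^2(h_\W^2)_{ij}+\partial_i u\,\partial_j u$, from which \eqref{gApp} is immediate. Estimate \eqref{gInvApp} follows by a Neumann-series expansion of $g^{-1}$ around $\omega^{-1}$, and \eqref{detApp} from Jacobi's formula $\det(I+A)=1+\tr A+O(|A|^2)$ applied to $\omega^{-1}(g-\omega)$. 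For \eqref{nuApp}, I would compute the unit normal to $\Span\{\partial_i\psi\}$: a direct calculation gives $\nu_\Sigma=(\nu-\omega^{ij}(\partial_i u)\partial_j)/(1+O(|\nabla u|^2))$ up to quadratic remainders in $(u,\nabla u)$, which after normalization yields the claimed linearization $\nu_\Sigma\approx\nu-\nabla u$.

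The bulk of the work is the second fundamental form, \eqref{hApp}. Starting from $(h_\Sigma)_{ij}=-\langle\partial_j\nu_\Sigma,\partial_i\psi\rangle$, I substitute the expansions for $\partial_i\psi$ and $\nu_\Sigma$ above and differentiate $\nu_\Sigma$. The tangential piece of this derivative produces the covariant Hessian $\nabla^2 u$ (the Christoffel symbols of $\W$ enter precisely to convert partial into covariant derivatives), while the normal piece, together with the Weingarten relation differentiated once more, generates the $h_\W^2 u$ term. Collecting gives the linear approximation $h_\Sigma\approx h_\W-\nabla^2 u+h_\W^2 u$, with all remaining terms quadratic in $(u,\nabla u,\nabla^2 u)$. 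For \eqref{secondaforma}, since $S_F=A^F(\nu_\Sigma)\circ d\nu_\Sigma$ and $A^F$ is a smooth section on $\esse^n$, I would Taylor-expand $A^F(\nu_\Sigma)=A^F(\nu)+dA^F(\nu)[\nu_\Sigma-\nu]+O(|\nu_\Sigma-\nu|^2)$ and combine with \eqref{hApp} and \eqref{nuApp}; the Codazzi property of $A^F$ recalled in Lemma \ref{AnisoCodazzi} is what allows the linear terms to be collected into the divergence-form expression $-\nabla(A^F\nabla u)+h_\W u$.

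Finally, for \eqref{curvatura}, I trace \eqref{secondaforma}, integrate over $\Sigma$, and divide by $\Per(\Sigma)=\Per(\W)$: the linear term $\divv_\W(A^F\nabla u)$ has vanishing integral on the closed manifold $\W$, the discrepancy between $dV_\Sigma$ and $dV_\W$ is controlled by \eqref{detApp}, and the remaining contributions obey the desired sup-norm bound. The main obstacle is the bookkeeping in \eqref{hApp}--\eqref{secondaforma}: specifically, tracking how the covariant Hessian arises when differentiating $\nu_\Sigma$ against the curved background (carefully separating partial and covariant derivatives and using the Weingarten map of $\W$ to rewrite partial derivatives of $h_\W$ in terms of covariant ones), and checking that every non-linear contribution factors as a product of at least two elements from $\{u,\nabla u,\nabla^2 u\}$, so that the final bound $C\sqrt{\epsilon}(|u|+|\nabla u|+|\nabla^2 u|)$ survives.
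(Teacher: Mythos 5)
Your proposal follows essentially the same route as the paper: differentiate the radial parametrization, read off the metric, the normal and the second fundamental form to first order in $(u,\nabla u,\nabla^2 u)$, absorb every at-least-quadratic term into $C\sqrt{\epsilon}\,(|u|+|\nabla u|+|\nabla^2 u|)$, and obtain \eqref{secondaforma} by expanding $A^F$ along $\nu_\Sigma-\nu$ and using $S_F(\W)=\Id$ together with the Codazzi symmetry of $A^F$ to cancel the remaining first-order terms (the paper performs this cancellation silently, so your explicit appeal to the Codazzi property is a welcome clarification, as is your use of the perimeter normalization to kill the surviving first-order term in \eqref{curvatura}). The only caveat is your Weingarten sign convention $\partial_i\nu=-(h_\W)_i^k\partial_k$, which yields $g_{ij}\approx\omega_{ij}-2uh_{ij}$ rather than the $+2uh_{ij}$ of \eqref{gApp}; with the paper's convention $\nabla_i\nu=h_i^k z_k$ (so that $h_\W>0$ for the convex Wulff shape with outer normal) the signs match and the rest of your argument goes through unchanged.
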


 \begin{prop}\label{C1Close}
Let $\Sigma$ be a closed, convex hypersurface in $\erre^{n+1}$ such that $\Sigma \subset B_\epsilon(\W)$.  If $\epsilon>0$ is small enough, there exists $0<C=C(\W)$ such that $\Sigma$ is $C^1$ close to $\W$, that is 
\begin{align}
\norm{u}_{C^0} \le \epsilon \label{AppC0} \\
\norm{\nabla u }_{C^0} \le C\sqrt{\epsilon}. \label{AppC1}
\end{align}
\end{prop}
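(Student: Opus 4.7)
The plan is to obtain \eqref{AppC0} directly from the definition of tubular coordinates, and to derive \eqref{AppC1} via a quadratic‐gain argument that exploits convexity of $\Sigma$ together with uniform $C^2$‐convexity of the fixed reference $\W$.

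For the $C^0$ estimate \eqref{AppC0}, I would observe that the radial parametrization $\psi(x) = x + u(x)\nu(x)$ exhibits $u(x)$ as exactly the (signed) tubular coordinate of $\psi(x)$ in $B_\epsilon(\W)$. The inclusion $\Sigma \subset B_\epsilon(\W)$ therefore translates pointwise into $|u(x)| \le \epsilon$.

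For \eqref{AppC1}, fix $x_0 \in \W$, set $y_0 := \psi(x_0)$, and let $\theta := \angle(\nu(x_0),\nu_\Sigma(y_0))$. A direct computation of $d\psi$ yields the identity $\tan\theta = |(\id + u(x_0)\, d\nu|_{x_0})^{-1}\nabla u(x_0)|$ (up to the usual tangent‐space identifications); since $\id + u\, d\nu$ has uniformly bounded inverse for $\epsilon$ small, one obtains
\[
|\nabla u(x_0)| \le (1 + C\norm{u}_{C^0})\tan\theta.
\]
It therefore suffices to establish $\theta \le C\sqrt{\epsilon}$.

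To prove this angle bound, let $\xi := \nu_\Sigma(y_0)$ and let $x^\star \in \W$ be the unique point with $\nu(x^\star) = \xi$. Convexity of $\Sigma$ makes the tangent hyperplane to $\Sigma$ at $y_0$ a supporting hyperplane for $U_\Sigma$, and the inclusion $\Sigma \subset B_\epsilon(\W)$ upgrades to $U_\W \subset U_\Sigma + \overline{B_\epsilon}$; in particular
\[
\langle x^\star - y_0, \xi\rangle \le \epsilon.
\]
On the other hand, a $\W$-geodesic $\gamma(s)$ starting at $x_0$ with initial tangent $e$ chosen in the plane spanned by $\xi$ and $\nu(x_0)$ admits the Taylor expansion $\gamma(s) = x_0 + se - (s^2/2)\,\alpha(e,e)\nu(x_0) + O(s^3)$. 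Combined with the uniform lower bound $\alpha \ge \kappa_0 \omega$ coming from strict convexity of $\W$, optimization in $s$ produces the quadratic gain
\[
\langle x^\star - x_0, \xi\rangle \ge \max_{s>0} \langle \gamma(s) - x_0, \xi\rangle \ge c(\W)\sin^2\theta.
\]
Chaining these estimates with $\langle y_0 - x_0, \xi\rangle = u(x_0)\cos\theta$ and $|u(x_0)| \le \epsilon$ yields $c(\W)\sin^2\theta \le 2\epsilon$, whence $\theta \le C\sqrt{\epsilon}$.

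The main obstacle is the quadratic‐gain step: its uniformity in the base point $x_0 \in \W$ reduces to uniform positive lower and upper bounds on the eigenvalues of the second fundamental form of $\W$, which are available since $\W$ is a fixed smooth closed strictly convex hypersurface.
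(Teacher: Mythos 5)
Your proof is correct, but it follows a genuinely different route from the paper's. The paper writes both $\W$ and $\Sigma$ as local graph charts over a common tangent plane, obtains a uniform $C^2$ bound on the graph function of $\W$ from elliptic regularity applied to its mean-curvature equation, and then applies a semiconvexity lemma (Lemma~\ref{Semiconvexity}) to the difference of the two convex graph functions, converting $C^0$-closeness into a Lipschitz bound of size $\sqrt{\epsilon}$. You instead argue synthetically and globally: you bound the angle $\theta$ between $\nu_\W(x_0)$ and $\nu_\Sigma(\psi(x_0))$ via a support-hyperplane/support-function estimate, exploiting that the supporting hyperplane of $U_\Sigma$ at $\psi(x_0)$ together with $\Sigma\subset B_\epsilon(\W)$ forces $\langle x^\star - x_0,\xi\rangle\le 2\epsilon$, while strict convexity of $\W$ gives a quadratic lower bound $\langle x^\star - x_0,\xi\rangle\ge c(\W)\sin^2\theta$; translating $\theta\le C\sqrt{\epsilon}$ into $|\nabla u|\le C\sqrt{\epsilon}$ then follows from the exact expression $\tan\theta = |(\omega+uh)^{-1}\nabla u|$ (which is equation~\eqref{nuPrecise} in the paper). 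The two arguments both use convexity of $\Sigma$ in an essential way, and both ultimately rely on uniform $C^2$ strict convexity of the fixed~$\W$ (you through the lower bound on the second fundamental form, the paper through the boundedness of $D^2 f$ via elliptic regularity). Your version has the merit of avoiding elliptic regularity entirely and being coordinate-free, while the paper's has the merit of being purely local. Two minor points you should close up for a polished argument: (1) the inclusion $U_\W\subset U_\Sigma+\overline{B_\epsilon}$ should be derived explicitly from the radial parametrization with $|u|\le\epsilon$, which gives $d_{\hd}(\overline{U_\W},\overline{U_\Sigma})\le\epsilon$ for convex bodies; (2) the Taylor-expansion optimization only makes sense for small $\theta$, so you should first dispatch $\theta\ge\theta_0$ by noting that $\langle x^\star - x_0,\xi\rangle=h_\W(\xi)-\langle x_0,\xi\rangle$ (support function of $\W-x_0$) is a continuous function bounded away from zero on $\{\angle(\xi,\nu(x_0))\ge\theta_0\}$ by compactness and strict convexity, contradicting $\le 2\epsilon$ for $\epsilon$ small.
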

 
We now have all the tools to prove Theorem \ref{MainEstimateb}:

\begin{proof}[Proof of Theorem \ref{MainEstimateb}]
The idea behind the proof is to perform a proper, quantitative linearisation of $S_F$ for small $u$. 
For every $\eta \in C^\infty(\W)$, we define the $1$-parameter family of diffeomorphisms (provided $t$ is small enough)
\begin{equation}\label{RadialPara}
\psi_t \daA{\W}{\Sigma_t},\ \psi(x)= x + t \eta(x) \nu(x).
\end{equation}
We will denote
$$h(t):=h(\Sigma_t), \qquad \nu_t:=\nu_{\Sigma_t}, \qquad  S_F(t):=S_F(\Sigma_t),  \qquad \mbox{and}\qquad A:=A^F$$
and when the time dependence is omitted, it has to be intended evaluated in $0$.
We observe that
$$\restr{\frac{\partial}{\partial t}}{t=0}h^i_j(t)=-\nabla^i\nabla_j\eta-\eta h^i_kh^k_j, \quad \mbox{and} \quad \restr{\frac{\partial}{\partial t}}{t=0}h_{ij}(t)=-\nabla_i\nabla_j\eta+\eta h_{ik}h^k_j.$$
We have all the notation to define the stability operator
\begin{equation}\label{stabop}
\begin{split}
\tilde{L}[\eta]^i_j:=-\restr{\frac{\partial}{\partial t}}{t=0}(S_F(t))^i_j&=- \restr{\frac{\partial}{\partial t}}{t=0}\left(\restr{A^i_k}{\nu_t}h_j^k(t)\right)\\
&=-\restr{A^i_k}{\nu}\restr{\frac{\partial}{\partial t}}{t=0}\left(h_j^k(t)\right)-\restr{\frac{\partial}{\partial t}}{t=0}\left(\restr{A^i_k}{\nu_t}\right)h_j^k\\
&=\restr{A^i_k}{\nu}\left(\nabla^k\nabla_j\eta+\eta h^k_ph^p_j\right)+\restr{D_p(A^i_k)}{\nu}h_j^k\nabla^p\eta\\
&=\nabla_j(A\nabla \eta)^i+\eta h_j^i.
\end{split}
\end{equation}
Now let us come back to the study of $\Sigma$ parametrized by the map $\psi$ as in \eqref{RadialPar1}.
We first claim that 
\begin{equation}\label{Claimone}
 \norm{S_F(\Sigma)-S_F(\W)+\tilde{L}[u]}_{L^p(\W)}  \le C \sqrt{\epsilon} \norm{u}_{W^{2, \, p}(\W)}.
 \end{equation}
Indeed, from \eqref{stabop} and \eqref{secondaforma} we can estimate
$$|S_F(\Sigma)-S_F(\W)+\tilde{L}[u]|\le  C \sqrt{\epsilon} \coup*{\abs{u} + \abs{\nabla u} + \abs{\nabla^2 u }}.$$
We consequently get \eqref{Claimone}, taking the power $p$ and integrating on $\Sigma$.
From \eqref{Claimone}, we estimate
\begin{equation}\label{poi}
\begin{split}
\norm{\tilde{L}[u]}_{L^p(\W)}  &\le  \norm{S_F(\Sigma)-S_F(\W)]}_{L^p(\W)}+\norm{S_F(\Sigma)-S_F(\W)+\tilde{L}[u]}_{L^p(\W)}\\
&\le \norm{S_F(\Sigma)-S_F(\W)]}_{L^p(\W)}+C \sqrt{\epsilon} \norm{u}_{W^{2, \, p}(\W)}\\
&\le \left\|S_F(\Sigma)- \frac{\overline{H_F(\Sigma)}}{n} \Id\right\|_{L^p(\W)}+\left\|S_F(\W)- \frac{\overline{H_F(\W)}}{n} \Id\right\|_{L^p(\W)}\\
& \quad +C \norm{\overline{H_F(\Sigma)}-\overline{H_F(\W)} }_{L^p(\W)}+C \sqrt{\epsilon} \norm{u}_{W^{2, \, p}(\W)},
\end{split}
\end{equation}
which thanks to the characterization of the Wulff shape $S_F(\W) \equiv \frac{1}{n}\overline{H_F(\W)} \Id$ (see Theorem \ref{AnisoSecondFFTheorem}), reads
\begin{equation}\label{poi1}
\begin{split}
\norm{\tilde{L}[u]}_{L^p(\W)}  &\le \left\|S_F(\Sigma)- \frac{\overline{H_F(\Sigma)}}{n} \Id\right\|_{L^p(\W)}\\
&\quad +C \norm{\overline{H_F(\Sigma)}-\overline{H_F(\W)} }_{L^p(\W)}+C \sqrt{\epsilon} \norm{u}_{W^{2, \, p}(\W)}.
\end{split}
\end{equation}
Using \eqref{curvatura}, we get that 
$$\norm{\overline{H_F(\Sigma)}-\overline{H_F(\W)} }_{L^p(\W)}\leq C \sqrt{\epsilon} \norm{u}_{W^{2, \, p}(\W)},$$
which, plugged in \eqref{poi1}, gives the estimate
\begin{equation}\label{poi2}
\begin{split}
\norm{\tilde{L}[u]}_{L^p(\W)}  &\le  \left\|S_F(\Sigma)- \frac{\overline{H_F(\Sigma)}}{n} \Id\right\|_{L^p(\W)}+C \sqrt{\epsilon} \norm{u}_{W^{2, \, p}(\W)}.
\end{split}
\end{equation}
Now, we recall that by \eqref{usata}
$$  \left\|S_F(\Sigma)- \frac{\overline{H_F(\Sigma)}}{n} \Id\right\|_{L^p(\W)} \leq C(n,p,F) \norm{\Sdot_F}_p,$$
which, together with \eqref{poi2}, gives
$$\norm{\tilde{L}[u]}_{L^p(\W)} \le C \coup*{ \norm{\Sdot_F}_{L^p(\Sigma)} + \sqrt{\epsilon} \norm{u}_{W^{2, \, p}(\W)} }.$$ 
In order to conclude the proof of \eqref{MainEstimateEq}, we are just left to show that
\begin{equation}\label{quasi0}
\inf_{c \in \erre^{n+1}} \norm{u - \phi_c}_{W^{2, \, p}(\W)} \le C\norm{\tilde{L}[u]}_{L^p(\W)}.
\end{equation}
To this aim, since the normal vector field $\nu_\Sigma:\W\to \esse^n$ of $\Sigma$ is a diffeomorphism, we can assume that $u=f\circ \nu_\Sigma$, for some $f \in C^\infty(\esse^n)$. Under this assumption, we can consequently write $\nabla^k u=\restr{D^pf}{\nu}h^k_p$ and obtain
$$\tilde{L}[f\circ \nu]^i_j=\nabla_j(A\nabla u)^i+h_j^if\circ \nu=\nabla_j(\restr{D^if}{\nu})+h_j^if\circ \nu=\restr{D^iD_pf}{\nu}h_j^p+h_j^if\circ \nu.$$
Observe that, since $A$ is invertible, then $\tilde{L}[f\circ \nu]=0$ if and only if 
$$0=A \tilde{L}[f\circ \nu]=A^j_k(\restr{D^iD_pf}{\nu}h_j^p+h_j^if\circ \nu)=D^iD_kf+f\delta_k^i.$$
But we already know that $D^2f+f\sigma=0$ if and only if $f(x)=\langle c, x\rangle$.
We deduce that
\begin{equation}\label{quasi}
\begin{split}\norm{\tilde{L}[u]}_{L^p(\W)} &\ge C \norm{D^2f+f\sigma}_{L^p(\esse^n)}\ge C \norm{\Delta f+nf}_{L^p(\esse^n)}\\
&=\inf_{c \in \erre^{n+1}} \norm{f - \Phi_c}_{W^{2, \, p}(\esse^n)},\end{split}
\end{equation}
where  $\Phi_c \daA{\esse^n}{\R},\ \Phi_c(y) := \langle c, \, y \rangle$ and the last inequality can be found in \cite[Theorem 3.10]{Gioffre2016}.
Since $\nu_\Sigma:\W\to \esse^n$ is a diffeomorphism, we know that
$$\norm{f - \Phi_c}_{W^{2, \, p}(\esse^n)}\geq C \norm{u - \phi_c}_{W^{2, \, p}(\W)}.$$
Plugging this inequality in \eqref{quasi}, we get the desired estimate \eqref{quasi0} and conclude the proof.

\end{proof}

\section{Conclusion}\label{Conclusion}
In this section we conclude the proof of Theorem \ref{MainThm}. To this aim we give the following definition:
\begin{defi}\label{defin}
Let $u \in C^\infty(\W)$ be given. We define 
\begin{equation}\label{MeanMainC}
v(u)  := \sum_{i=1}^{n+1} \langle u, \, \phi_i\rangle_{L^2} w_i,
\end{equation}
where $\set{w_i}_{i=1}^{n+1}\subset \R^{n+1}$ are chosen such that the associated functions $\phi_i := \phi_{w_i}$ (defined as in \eqref{PhiDef}) are an orthonormal frame in $L^2$ for the vector space $\set{\phi_c}_{c \in \erre^{n+1}}$. We will denote $\phi_u:= \phi_{v(u)}$. 
\end{defi}
We will need the following proposition, whose proof is postponed at the end of this section:
\begin{prop}\label{MeanMainEstimate}
There exists $C=C(n, \, p, \, F)>0$ such that, for every $u \in C^\infty(\W)$, it holds
\begin{equation}\label{MeanMainEstimateEq}
\norm{u - \phi_u}_{W^{2, \, p}(\W)} \le C \inf_{c \in \erre^{n+1}} \norm{u - \phi_c}_{W^{2, \, p}(\W)}.
\end{equation}
\end{prop}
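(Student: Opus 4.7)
The plan is to interpret the map $u \mapsto \phi_u$ as the $L^2$-orthogonal projection $P$ onto the finite-dimensional subspace
\begin{equation*}
V := \Span\{\phi_c : c \in \R^{n+1}\} = \Span\{\phi_1, \dots, \phi_{n+1}\} \subset C^\infty(\W),
\end{equation*}
which has dimension exactly $n+1$ because $\nu \colon \W \to \esse^n$ is a diffeomorphism: its image is not contained in any hyperplane through the origin, so $c \mapsto \phi_c$ is injective. Granted this, $\{w_i\}_{i=1}^{n+1}$ is a basis of $\R^{n+1}$, and for any $c = \sum_i \alpha_i w_i$ the linearity of $c \mapsto \phi_c$ together with the $L^2$-orthonormality of $\{\phi_i\}$ give $\phi_c = \sum_i \alpha_i \phi_i$ and $\langle \phi_c, \phi_j\rangle_{L^2} = \alpha_j$, hence $v(\phi_c) = c$ and $P \phi_c = \phi_c$. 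By linearity, for every $c \in \R^{n+1}$ one then has
\begin{equation*}
u - \phi_u \;=\; u - Pu \;=\; (I - P)(u - \phi_c),
\end{equation*}
whence $\norm{u - \phi_u}_{W^{2,p}(\W)} \le \bigl(1 + \norm{P}_{W^{2,p} \to W^{2,p}}\bigr) \norm{u - \phi_c}_{W^{2,p}(\W)}$, and taking the infimum over $c$ reduces the proposition to bounding the operator norm of $P$ on $W^{2,p}(\W)$ by a constant depending only on $n$, $p$ and $F$.

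The continuity of $P$ on $W^{2,p}(\W)$ follows directly from the explicit formula \eqref{MeanMainC}: by H\"older's inequality and the compactness of $\W$,
\begin{equation*}
\norm{Pu}_{W^{2,p}(\W)} \;\le\; \sum_{i=1}^{n+1} \abs{\langle u, \phi_i\rangle_{L^2}} \norm{\phi_i}_{W^{2,p}(\W)} \;\le\; C(n,p,F) \norm{u}_{L^1(\W)} \;\le\; C(n,p,F) \norm{u}_{L^p(\W)},
\end{equation*}
where I used that each $\phi_i = \phi_{w_i}$ is a fixed smooth function on $\W$, depending only on $n$ and $F$, so that $\norm{\phi_i}_{L^\infty(\W)}$ and $\norm{\phi_i}_{W^{2,p}(\W)}$ are bounded by a constant $C(n,p,F)$. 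Since trivially $\norm{u}_{L^p(\W)} \le \norm{u}_{W^{2,p}(\W)}$, this closes the argument.

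The proof is essentially a functional-analytic identity, so I do not expect any substantial obstacle. The only slightly delicate point is the dimension count for $V$, which requires the injectivity of $c \mapsto \phi_c = \langle c, \nu(\cdot)\rangle$ noted above; once $V$ is identified as an $(n+1)$-dimensional space of fixed smooth functions, all norms on $V$ are equivalent and the boundedness of $P$ on $W^{2,p}(\W)$ is automatic.
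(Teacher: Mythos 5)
Your proof is correct and complete: identifying $\phi_u$ with the $L^2$-orthogonal projection $Pu$ onto the $(n+1)$-dimensional space $V=\Span\{\phi_c\}$, checking $P\phi_c=\phi_c$ so that $u-\phi_u=(I-P)(u-\phi_c)$ for every $c$, and bounding $\norm{P}_{W^{2,p}\to W^{2,p}}$ via H\"older and the smoothness of the fixed functions $\phi_i$ is exactly the standard argument, and the dimension count you flag (injectivity of $c\mapsto\langle c,\nu(\cdot)\rangle$, which holds since $\nu\colon\W\to\esse^n$ is surjective) is the only point that needs saying. The paper announces that the proof is postponed to the end of the section but in fact never supplies it, so there is nothing to compare against; your argument fills that gap correctly.
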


\begin{proof}[Proof of Theorem \ref{MainThm}]
Let $\epsilon>0$ to be fixed small enough at the end of the argument. Let $\delta_0 \leq \delta$, where $\delta$ is as in Theorem \ref{QualitativeAppThm}. By Theorem \ref{QualitativeAppThm} combined with Proposition \ref{C1Close}, $\Sigma=\partial U$ is a radially parametrized hypersurface as in \eqref{RadialPar1} so that the radius $u$ satisfies inequalities \eqref{AppC0} and \eqref{AppC1}. We can consequently apply Theorem \ref{MainEstimateb} to get that $\Sigma$ satisfies \eqref{MainEstimateEq}.

We notice that, for every $c \in U$, we can define 
\begin{equation}
\psi_c \daA{\W}{\Sigma - c },\quad \psi_c(x) := x + u_c(x) \nu(x).
\end{equation}
For every $c$ the mapping $\psi_c$ is an alternative radial parametrization for $\Sigma$, and it is a well defined diffeomorphism. We also define:
\begin{equation*}
\Phi \daA{U}{\erre^{n+1}},\quad \Phi(c):= \sum_{i=1}^{n+1} \langle u_c, \, \phi_i\rangle_{L^2} w_i=v(u_c),
\end{equation*}
where $\set{w_i}_{i=1}^{n+1}$ are as in Definition \ref{defin}.
Our idea is to prove the existence of $c_0 \in U$ such that $\Phi(c_0)=0$. This is enough to conclude the proof, because $\Phi(c_0)=0$ implies that $\phi_{u_{c_0}}=\langle \Phi(c_0), \nu \rangle =0$, which, together with Proposition \ref{MeanMainEstimate}, imply
\begin{equation}\label{eua}
 \norm{u_{c_0}}_{W^{2, \, p}_\sigma(\W)} \le C \coup*{ \norm{\Sdot_F}_{L^p_g(\Sigma)} + \sqrt{\epsilon} \, \norm{u_{c_0}}_{W^{2, \, p}_\sigma(\W)} }  .
\end{equation}
Therefore, if we set $\epsilon_0 = \left(\frac{1}{2C}\right)^2$, then the second term in the right hand side of \eqref{eua} can be absorbed in the left hand side, obtaining  
\begin{equation}\label{eua1}
\norm{\psi_{c_0} - \id}_{W^{2, \, p}_\sigma(\W)}= \norm{u_{c_0}}_{W^{2, \, p}_\sigma(\W)}\leq C  \norm{\Sdot_F}_{L^p_g(\Sigma)}.
\end{equation}
Defining $\psi:=\psi_{c_0}+c_0$, the estimate \eqref{eua1} reads exactly as \eqref{MainEstimate}, which is the desired conclusion of Theorem \ref{MainThm}.

We are just left to find $c_0\in U$ such that $\Phi(c_0)=0$. First of all, it is easy to notice that there exist $\tilde{\epsilon}$ and $\tilde{r}$ depending only on $n$ and $\W$, such that, for every $ 0<\epsilon < \tilde{\epsilon}$, if $\Sigma= \partial U$ satisfies
\[
d_{\hd}\coup*{ U, \, U_{\W} } < \epsilon, 
\] 
then the ball $B^{n+1}_{\tilde{r}}$ is contained in $U$. Hence we consider $0< \epsilon < \tilde{\epsilon}$ so 
small that the ball $B^{n+1}_{\tilde{a} \, \epsilon}$ is contained in $U$ for some $\tilde{a}$, depending only on $n$ and $\W$, that we will choose later. We study $\Phi$ inside $B^{n+1}_{\tilde{a} \, \epsilon}$.  We will show that $\Phi$ admits the following linearisation:
\begin{equation}\label{PhiLinearised}
\Phi(c)= \Phi(0) - c + O_{n, \, \W}(\epsilon^{\frac{3}{2} })\ \mbox{ for every } c \in B^{n+1}_{\tilde{a} \, \epsilon}.
\end{equation}
Indeed, for every $c$ such that $\abs{c} < \tilde{a} \, \epsilon$ we find 
\[
d_{\hd}\coup*{\Sigma -c, \, \W} \le d_{\hd}\coup*{\Sigma -c, \, \Sigma} + d_{\hd}\coup*{\Sigma, \, \W} \le (\tilde{a}+1) \epsilon.
\]
Therefore, it is easy to see that also the function $u_c$ satisfies the estimates 
\begin{align}
\norm{u_c}_{C^0} &\le C \epsilon, \label{1}\\
\norm{\nabla u_c}_{C^0} &\le C \sqrt{\epsilon}, \label{2}
\end{align}
for some $C$ depending only on $n$ and $\W$. We start the linearisation with the following simple consideration: for every $z \in \W$ there exists $x_c=x_c(z)\in\W$ so that
\[
\psi_c(z) = \psi(x_c) - c.
\]
We expand this equality and find
\begin{equation}\label{Equalityzxc}
 z + u_c (z) \, \nu(z) = x_c + u(x_c) \nu(x_c) - c .
\end{equation}
Using the $C^1$-smallness of $u$ and $u_c$, we can easily see that $x_c=x_c(z)$ satisfies the relation
\begin{equation}\label{xcApp}
x_c(z) = z + O_{n, \, \W}(\epsilon) .
\end{equation}
This approximation, combined with \eqref{2}, gives an estimate of $u$ close to $z$: 
\begin{equation}\label{Estimate32}
u(x_c(z)) =  u(z) + O_{n, \, \W}\coup*{\epsilon^{\frac{3}{2}}}.
\end{equation}
Using \eqref{gauge}, we evaluate $F^*$ in the point in \eqref{Equalityzxc}:
\begin{align*}
\underbrace{F^*( z + u_c (z) \, \nu(z))}_{= 1 +  u_c(z) \restr{ dF^*}{z}[\nu(z)] + O_{n, \, \W}(\epsilon^2) } = 
\underbrace{F^*( x_c + u (x_c) \, \nu(x_c) -c)}_{= 1 +  u(x_c) \restr{ dF^*}{x_c}[\nu(x_c)] - \restr{ dF^*}{x_c}[c]  + O_{n, \, \W}(\epsilon^2) }.
\end{align*}
Plugging \eqref{Robin} and \eqref{Estimate32} in the previous equality, we obtain
\begin{equation}\label{GoodEstimate}
u_c(z) = u(z) - \underbrace{\langle c, \, \nu(z)\rangle}_{=\phi_c(z)}  + O_{n, \, W} \coup*{\epsilon^{\frac{3}{2}}}.
\end{equation} 
Integrating over $\W$ and using \eqref{GoodEstimate}, we finally obtain \eqref{PhiLinearised}.
We are now ready to show that $0$ is in the range of $\Phi$. We show that there exists $c_0 \in B^{n+1}_{ \tilde{a} \epsilon}$ such that $\Phi(c_0)=0$. Indeed, choosing 
$$\tilde{a}(n, \, \W) = (n+1) \sup_i |w_i| \, \sup_i \norm{\phi_i}_{L^1} +1 $$
 and $\epsilon$ sufficiently small, we have that
\begin{equation*}
\begin{split}
&(n+1) \sup_i |w_i| \, \sup_i \norm{\phi_i}_{L^1} \norm{u}_{C^0} + O_n\coup*{\epsilon^{\frac{3}{2}}} < \tilde{a} \,  \epsilon \\
&\Rightarrow \abs{\Phi(0)  + O_{n, \, \W}\coup*{\epsilon^{\frac{3}{2}}}} < \tilde{a} \, \epsilon \\
&\Rightarrow \exists c_0\in B^{n+1}_{ \tilde{a} \epsilon}\, :\, c_0 = \Phi(0)  + O_{n, \, \W}\coup*{\epsilon^{\frac{3}{2}}} \\
&\overset{\eqref{PhiLinearised}}{\Rightarrow}\exists c_0\in B^{n+1}_{ \tilde{a} \epsilon}\, :\, \Phi(c_0) = 0,
\end{split}
\end{equation*}
as desired.
\end{proof}

\appendix \label{APPA}
\section{Proof of computational propositions}
In this section we prove the computational propositions stated in Section \ref{Quan}. 

\begin{proof}[Proof of Proposition \eqref{GeomApp}]
Let $x \in \Sigma$, and let $\set{z_1, \, \dots \, z_n}$ be a frame for $T_x \W$. We compute the differential $d\psi$ in these coordinates, obtaining 
\begin{equation}\label{PsiDiff}
\nabla_i \psi = z_i + \nabla_i u \, \nu + u \nabla_i  \nu .
\end{equation}
Using \ref{PsiDiff}, we compute the expression for $g$.
\begin{align*}
g_{ij} 
&= \langle \nabla_i \psi, \, \nabla_j \psi \rangle = \langle  z_i + \nabla_i u \, \nu + u \nabla_i  \nu, \,  z_j + \nabla_j u \, \nu + u \nabla_j  \nu  \rangle \\
&= \omega_{ij} + 2 u h_{ij} + \nabla_i u \, \nabla_j u + u^2 \underbrace{\langle \nabla_i \nu, \, \nabla_j \nu \rangle}_{= h^k_i h_{kj}}
\end{align*}
and \eqref{gApp} follows immediately. Estimates \eqref{gInvApp} and \eqref{detApp} are easy consequences of \eqref{gApp}.

Now we prove \eqref{nuApp}. We exhibit the exact expression for $\nu_\Sigma$.  We firstly search for a vector $ V = \nu + a^i z_i $ which satisfies  the condition $\langle V , \nabla_j \psi \rangle = 0$ for every $j=1,\dots,n$:
\begin{align*}
0 
&= \langle V, \, \nabla_j \psi \rangle =  \langle  \nu + a^i z_i , \, z_j + \nabla_j u \, \nu + u \nabla_j  \nu \rangle  \\
&=  \nabla_j u + (\omega + u h)^i_j a_i.
\end{align*}
Since $\nu_\Sigma= \frac{V}{|V|}$, we obtain the expression for $\nu_\Sigma$:
\begin{equation}\label{nuPrecise}
\nu_\Sigma = \frac{\nu - \coup*{\omega + u h}^{-1}[\nabla u]}{\abs{\nu - \coup*{\omega + u h}^{-1}[\nabla u]}}
\end{equation}
and we easily find \eqref{nuApp}. Estimate \eqref{hApp} follows by the very definition of $h_\Sigma$. Indeed, we write
\[
\nu_{\Sigma} = \nu - \nabla u + \mathcal{R}
\]
where $\mathcal{R}$ is a combination of products of $u$ and $\nabla u$. We obtain  
\begin{align*}
(h_{\Sigma})_{ij} = \langle \nabla_i \psi, \, \nabla_j \nu_\Sigma \rangle = \langle z_i + \nabla_i u \, \nu + u \nabla_i \nu, \, \nabla_j \coup*{\nu - \nabla u + \mathcal{R}}  \rangle
\end{align*}
and \eqref{hApp} easily follows.

To prove \eqref{secondaforma}, since $\restr{A^F}{\nu_\W}h_\W=S_F(\W)=\id$, we compute:
\begin{equation*}
\begin{split}
&\left|S_F(\Sigma)-S_F(\W)+\nabla(\restr{A^F}{\nu_\W}\nabla u)-h_\W u\right|\\
&=\left|\restr{A^F}{\nu_\Sigma}h_\Sigma-\restr{A^F}{\nu_\W}h_\W+\nabla(\restr{A^F}{\nu_\W}\nabla u)-\restr{A^F}{\nu_\W}h_\W^2 u \right|.
\end{split}
\end{equation*}
Plugging in the previous equation \eqref{hApp} and \eqref{nuApp}, we conclude:
\begin{equation*}
\begin{split}
&\left|S_F(\Sigma)-S_F(\W)+\nabla(\restr{A^F}{\nu_\W}\nabla u)-h_\W u\right|\\
&\overset{\eqref{hApp}}{\leq}\left|\restr{A^F}{\nu_\Sigma}(h_\W- \nabla^2 u + h_\W^2 u)-\restr{A^F}{\nu_\W}h_\W\right.\\
&\quad \left. +\nabla(\restr{A^F}{\nu_\W}\nabla u)-\restr{A^F}{\nu_\W}h_\W^2 u \right|+  C \sqrt{\epsilon} \coup*{\abs{u} + \abs{\nabla u} + \abs{\nabla^2 u }}\\
&\overset{\eqref{nuApp}}{\leq}\left|-\restr{A^F}{\nu_\W} \nabla^2 u + \restr{A^F}{\nu_\W} h_\W^2 u-\nabla \restr{A^F}{\nu_\W}\nabla u h_\W\right.\\
&\quad \left. +\nabla(\restr{A^F}{\nu_\W}\nabla u)-\restr{A^F}{\nu_\W}h_\W^2 u \right|+  C \sqrt{\epsilon} \coup*{\abs{u} + \abs{\nabla u} + \abs{\nabla^2 u }}\\
&\leq  C \sqrt{\epsilon} \coup*{\abs{u} + \abs{\nabla u} + \abs{\nabla^2 u }},
\end{split}
\end{equation*}
which gives \eqref{secondaforma}.

In the same way, one can obtain also \eqref{curvatura}.
\end{proof}

\begin{proof}[Proof of Proposition \eqref{C1Close}]
The smallness of $u$ stated in \eqref{AppC0} is trivial, so let us focus on the smallness of $\nabla u$. We claim that if $\Sigma$ satisfies the hypothesis of Proposition \ref{C1Close}, then its tangent planes are near to the tangent planes of $\W$, namely 
\begin{equation}\label{PlaneApp}
d(T_x \W, \, T_{\psi(x)} \Sigma) \le C \sqrt{\epsilon},
\end{equation}
where $C=C(\W)$ and $d$ denotes the distance between elements of the $n$-dimensional Grassmannian (see \cite{SimonLN}). Then \eqref{AppC1} follows because $T_{\psi(x)} \Sigma = \restr{d \psi}{x}[T_x \W]$ and $d\psi$ can be written as 
\[
d\psi = \id + \nabla u \otimes \nu + u \, d\nu = \id + \nabla u \otimes \nu + O(\epsilon)
\]
concluding the proof. 

We are just left to prove \eqref{PlaneApp}. Let $U_\W$, $U_\Sigma$ be the open, convex and bounded sets eclosed by $\W$ and $\Sigma$. Since $\Sigma$ satisfies \eqref{AppC0}, up to center the two hypersurfaces, we can find two radii $r$ and $R$ depending only on $\W$ such that 
\[
B^{n+1}_r \subset U_\W \subset B^{n+1}_R, \qquad B^{n+1}_r \subset U_\Sigma \subset B^{n+1}_R.
\] 
We prove \eqref{PlaneApp} pointwise. Let $x\in \W$, and assume $T_x \W = \erre^n \times \set{0}$. We can write $\W$ and $\Sigma$ in local graph charts near $x$, and $\psi(x)$, and find two maps $\phi_\Sigma$, $\phi_\W$ such that 
\begin{align*}
&\phi_\W \daA{B^n_r}{\W}, \quad \phi_\W(z)= (z, \, f(z)), \, \phi_\W(0) = x \\
&\phi_\Sigma \daA{B^n_r}{\Sigma}, \quad \phi_\Sigma(z)= (z, \, g(z)), \, \phi_\Sigma(0) = \psi(x).
\end{align*}
The graph functions $f$ and $g$ are smooth, convex functions satisfying 
\begin{align*}
\norm{f - g}_0 \le \epsilon \mbox{ in } B^n_r \\
r \le f \le R, \qquad r \le g \le R.
\end{align*}
Therefore, by convexity (see \cite{Convex}), we obtain an upper bound on their Lipschitz constant:
\[
\Lip\coup*{f, \, B^n_{\frac{r}{2}}}, \, \Lip\coup*{g, \, B^n_{\frac{r}{2}}} \le \frac{4(R-r)}{r}=: L = L(\W)
\] 
We want to prove that $f$ and $g$ are $C^1$ close: this will conclude the proof, because $T_x \W = \restr{d \phi_\W}{0}[\erre^n]$, $T_{\psi(x)} \Sigma = \restr{d \phi_\Sigma}{0}[\erre^n]$ and therefore a $C^1$ closeness between the maps easily implies a $C^0$ closeness between the tangent planes. This $C^1$ closeness comes from Lemma \ref{Semiconvexity}, stated and proved right after this proof.

Lemma \ref{Semiconvexity} assures that $g-f$ satisfies the estimate 
\[
\abs{D(g - f)} \le 2 \sqrt{c(f) \epsilon},
\]
where $c(f)= \norm{D^2 f}_{C^0(B^n_r)}$. We show that $c(f)$ can be bounded by a constant independently of $f$. Indeed, we notice that in graph coordinates the mean curvature of $\W$ can be written as 
\[
H = \divv\coup*{\frac{Df}{\sqrt{1 + \abs{D f}^2}}}.
\]
Therefore, by elliptic regularity, we are able to find a constant $c=c(n, \, \alpha, \, r, \, R)>0$ such that 
\[
\norm{f}_{C^{2, \, \alpha}(B^n_{\frac{r}{2}})} \le c \coup*{  \norm{H}_{C^{0, \, \alpha}(B^n_{\frac{r}{2}})} + \norm{f}_{C^{0, \, \alpha}( \partial B^n_{\frac{r}{2}})} } \le c_0 =C_0(\W)
\]
and the thesis follows. The estimate on the perimeter now follows easily by estimate \eqref{detApp}.
\end{proof}

We conclude Appendix A with this lemma, which has been used in the previous proof:
\begin{lemma}\label{Semiconvexity}
Let $h\in C^\infty (\erre^n)$ be a $c_0$ semiconvex function in the ball $B_r^n$, namely 
\[
h + \frac{c_0}{2} \abs{\cdot}^2 \mbox{ is convex in }B_r^n.
\]
Assume $\abs{h} \le \epsilon$ in $B_r^n$. Then $\abs{D h} \le 2\sqrt{c_0 \, \epsilon}$ in $B_{\frac{r}{2}}^n$. 
\end{lemma}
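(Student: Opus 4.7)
The plan is to use the semiconvexity of $h$ as a one-sided Taylor inequality and then optimize in one direction. More precisely, convexity of $h + \frac{c_0}{2}|\cdot|^2$ on $B_r^n$ gives the supporting-hyperplane bound
\[
h(y) + \tfrac{c_0}{2}|y|^2 \,\ge\, h(x) + \tfrac{c_0}{2}|x|^2 + \bigl(Dh(x) + c_0 x\bigr) \cdot (y-x)
\]
for every $x,y \in B_r^n$. Writing $y = x+v$ and rearranging, the $x$-dependent cross terms cancel and we obtain the clean inequality
\[
h(x+v) \,\ge\, h(x) + Dh(x) \cdot v - \tfrac{c_0}{2}|v|^2
\]
for every $x \in B_r^n$ and every admissible $v$. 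For $x \in B_{r/2}^n$, this is available for all $|v| \le r/2$.

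Assuming $Dh(x) \neq 0$ (otherwise there is nothing to prove), I would plug in $v = t\, Dh(x)/|Dh(x)|$ with $t \in (0, r/2]$ yet to be chosen, use $|h| \le \varepsilon$ to bound $h(x+v) - h(x) \le 2\varepsilon$, and get
\[
t\,|Dh(x)| \,\le\, 2\varepsilon + \tfrac{c_0}{2} t^2,
\qquad\text{hence}\qquad
|Dh(x)| \,\le\, \tfrac{2\varepsilon}{t} + \tfrac{c_0 t}{2}.
\]
One-dimensional minimization in $t$ gives the optimal choice $t_\star = 2\sqrt{\varepsilon/c_0}$, for which the right-hand side evaluates exactly to $2\sqrt{c_0\varepsilon}$, yielding the stated estimate.

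There is no genuine obstacle; the only thing to verify is that $t_\star \le r/2$, i.e.\ $\varepsilon \le c_0 r^2 / 16$. This smallness is automatic in the setting where this lemma is applied inside the proof of Proposition \ref{C1Close}, where $\varepsilon$ represents Hausdorff proximity tending to zero while $c_0$ and $r$ are controlled by the fixed Wulff shape $\W$. If one is in the regime $\varepsilon > c_0 r^2 / 16$, the same chain of inequalities with $t = r/2$ still gives a (non-sharp) gradient bound, but only the small-$\varepsilon$ regime is needed here.
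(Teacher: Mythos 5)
Your proof is correct and follows essentially the same route as the paper: both establish the one-sided bound $h(x+v)\ge h(x)+Dh(x)\cdot v-\tfrac{c_0}{2}|v|^2$ (you via the supporting hyperplane of the convex function $h+\tfrac{c_0}{2}|\cdot|^2$, the paper via Taylor's formula with integral remainder together with $D^2h\ge -c_0\Id$), then bound the increment by $2\epsilon$ and optimize the step length, arriving at the same choice $t=2\sqrt{\epsilon/c_0}$ and the same estimate. Your explicit remark that the optimal step must fit inside $B^n_r$ (i.e.\ $\epsilon\le c_0r^2/16$) is a point the paper's proof silently glosses over, and is indeed harmless in the regime where the lemma is applied.
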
 
\begin{proof}
Let $x_0 \in \overline{B^n_r}$ and $v \in \esse^{n-1}$ be given such that $\langle Dh(x_0), \, v \rangle = \norm{Dh}_{C^0 (B_{\frac{r}{2}})}$. We write $h$ as:
\[
h(x_0 + \tau v) - h(x_0) = \tau \underbrace{ \langle Dh(x_0), \, v \rangle}_{= \norm{Dh}_0} + \frac{\tau^2}{2} \int_0^1 t \int_0^1 \restr{ D^2 h }{x_0 + s t \tau v}[v, \, v] .
\]
Now, by the very definition of semiconvexity, we know that $D^2 h \ge - c_0 \Id$ at every point, and plugging this inequality into the previous expression, we obtain the estimate
\[
\norm{D h}_0 \le \frac{2\epsilon}{\tau} + \frac{\tau}{2} c_0.
\]
Choosing $\tau = \frac{2\sqrt{\epsilon}}{\sqrt{c_0}}$, we obtain the thesis.
\end{proof}

\section{A characterization of the stability operator}
The proof of Theorem \ref{MainEstimateb} in Section \ref{Quan} would have been simpler if we had a precise characterization of the kernel of the stability operator. Unfortunately, we were not able to completely characterize it. Still, we can prove a partial characterization of it under an additional condition.
\begin{prop}\label{CharactL}
Denote by $L$ the stability operator defined as
\begin{equation}\label{LOperator}
L[u] := \divv \coup*{A^F \nabla u} + H u.
\end{equation}
The following equality holds: 
\begin{equation}\label{KerL}
\Set{u \in \ker (L) : \int_\W u = 0} = \Set{\phi_c \daA{y \in \W}{\langle c ,\nu(y) \rangle \in \erre}, \, \forall c \in \erre^{n+1}}.
\end{equation}
\end{prop}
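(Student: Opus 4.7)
My plan splits into the two inclusions $\supseteq$ and $\subseteq$.

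For the inclusion $\supseteq$, fix $c \in \R^{n+1}$ and set $f(y) = \langle c, y\rangle$, so that $\phi_c = f \circ \nu$. From the identity $D^2 f + f \sigma = 0$ on $\esse^n$, together with the Wulff umbilicality $A^F \circ d\nu = \Id$ on $\W$ (i.e.\ $S_F(\W) = \Id$, cf.\ Theorem \ref{AnisoSecondFFTheorem}), the computation carried out in and after \eqref{stabop} yields $\tilde L[\phi_c] = 0$; hence in particular $L[\phi_c] = \tr \tilde L[\phi_c] = 0$. Geometrically, $\phi_c$ is the normal velocity of the family of translated Wulff shapes $\Sigma_t = \W + tc$, and $L$ is (minus) the linearisation of $H_F$ along normal variations, so its vanishing on $\phi_c$ merely reflects the fact that $H_F$ is constant in $t$ along this family. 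The mean-zero property follows from the divergence theorem: $\int_\W \phi_c\, dV = \bigl\langle c, \int_\W \nu\, dV\bigr\rangle = \bigl\langle c, \int_{U_\W} \divv(\Id)\, dx\bigr\rangle = 0$.

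For the inclusion $\subseteq$, take $u \in \ker L$ with $\int_\W u\, dV = 0$. Since $\nu\colon \W \to \esse^n$ is a diffeomorphism, write $u = f \circ \nu$ for some $f \in C^\infty(\esse^n)$. Using $A^F \nabla u = Df$ (valid because $A^F$ and $h_\W$ are mutual inverses at the identified tangent spaces), the equation $L[u] = \divv_g(A^F \nabla u) + H_F u = 0$ translates on $\esse^n$ into a uniformly elliptic scalar PDE of the form $\tr_\sigma(h_\W D^2 f) + n f = 0$, where $h_\W = (A^F)^{-1}$ is positive definite. The goal is to conclude that the mean-zero constraint forces $f(y) = \langle c, y\rangle$, i.e.\ $u = \phi_c$ for some $c \in \R^{n+1}$.

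The main obstacle is exactly this last step. The operator $L$ is only the trace of the stronger matrix operator $\tilde L$, and one sees from $A \tilde L[u]^i_j = D^i D_j f + f \delta^i_j$ that $\tilde L[u] = 0$ (equivalently $D^2 f + f\sigma = 0$) immediately forces $f$ linear; but having only $\tr \tilde L[u] = 0$ the matrix argument does not apply directly. In the isotropic case $A^F = \sigma$ the PDE reduces to $\Delta_{\esse^n} f + n f = 0$, whose zero-mean kernel is spanned by the first-order spherical harmonics, so the conclusion is the classical one. For a general elliptic integrand I would proceed by interpreting $-L$ as the Jacobi operator of the second variation of $\PF$ at $\W$, restricted to variations preserving volume to first order (which is exactly the zero-mean condition $\int_\W u = 0$); the anisotropic isoperimetric inequality together with the rigidity of $\W$ modulo translations then yields that this quadratic form is non-negative on the zero-mean subspace with kernel exactly $\{\phi_c\}_{c\in\R^{n+1}}$, giving the desired inclusion. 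An alternative route is to exploit the integral orthogonality produced by the zero-mean constraint to upgrade $\tr\tilde L[u]=0$ to $\tilde L[u]=0$ and then invoke the matrix argument already used in the proof of Theorem \ref{MainEstimateb}. In either strategy, the non-trivial ingredient is the anisotropic spectral-gap/rigidity statement playing the role of the sphere's first Laplace eigenvalue.
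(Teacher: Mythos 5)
Your $\supseteq$ inclusion is complete and correct: the computation after \eqref{stabop} shows $\tilde L[\phi_c]=0$, hence $L[\phi_c]=\tr \tilde L[\phi_c]=0$, and the divergence theorem gives $\int_\W \phi_c=0$ (the paper does not even spell this easy direction out). The $\subseteq$ inclusion, however, contains a genuine gap, and you have located it yourself. You correctly reduce to showing that a zero-mean solution of the scalar equation $\tr\tilde L[u]=0$ must satisfy the full matrix equation $\tilde L[u]=0$ (equivalently $D^2f+f\sigma=0$), but both routes you propose beg the question. The first asserts that the second-variation form is non-negative on the zero-mean subspace ``with kernel exactly $\{\phi_c\}_{c\in\R^{n+1}}$'' --- yet the identification of that kernel is precisely the statement being proved, so the argument is circular. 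The second (``upgrade $\tr\tilde L[u]=0$ to $\tilde L[u]=0$'') is stated as a hope with no mechanism; for a general elliptic integrand there is no algebraic reason why vanishing of the trace should force vanishing of the whole matrix, and the isotropic spectral argument ($\Delta f+nf=0$ plus zero mean implies $f$ is a first spherical harmonic) has no ready analogue for the operator $\tr_\sigma(h_\W D^2f)+nf$ with variable coefficients.

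The paper supplies the missing mechanism, and it is quantitative rather than spectral. Given $u\in\ker L$ with zero mean, one builds a volume-preserving normal deformation $\psi_t(x)=x+(tu(x)+s(t)v(x))\nu(x)$ with $s(0)=\dot s(0)=0$ and compares $\W_t:=\psi_t(\W)$ with the translates $\W+tc$ in $L^1$, obtaining
\[
t\,\inf_{c\in\erre^{n+1}}\norm{u-\phi_c}_{L^1}\;\le\; s(t)+\inf_{c\in\erre^{n+1}}\abs{U_{\W_t}\,\Delta\,U_{\W+c}}.
\]
The right-hand side is controlled by the quantitative anisotropic isoperimetric inequality of Theorem \ref{DeficiThm}, $A(E)\le C(n)\sqrt{\PF(\partial E)-\PF(\W)}$, while the Koiso--Palmer first and second variation formulas show that $\int_\W u=0$ and $L[u]=0$ force $\PF(\W_t)-\PF(\W)=O(t^{3})$. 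Dividing by $t$ and letting $t\to0$ gives $\inf_c\norm{u-\phi_c}_{L^1}=0$, and since the infimum is attained, $u=\phi_c$. So the ``anisotropic spectral gap'' you flag as the missing ingredient is in fact replaced by the sharp quantitative Wulff inequality together with a second-variation expansion; without importing that (or an equivalent rigidity input), your argument does not close.
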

\begin{proof}
Let $u\in \ker (L)$ with null mean, and let $v \in C^\infty(\W)$ be a positive function with $\int_\W v = 1$. Using a construction shown in \cite{doCarmo} , we are able to find a function $s \daA{[0, \, \epsilon)}{\erre}$ such that the deformation
\[
\psi_t \daA{\W}{\erre^{n+1}},\ \psi_t(x) = x + (t u(x) + s(t) v(x)) \nu(x) 
\]
is volume preserving for every $t$. Moreover, the construction satisfies also $s(0)=\dot{s}(0)=0$. For every $c \in \erre^{n+1}$, we define the translation
\[
\psi^c_t \daA{\W}{\erre^{n+1}},\ \psi^c_t(x) = x + tc := x + t\phi_c(x) \, \nu(x) + t\xi_c(x),
\]
where we have set $\xi_c := c - \phi_c \, \nu$. 

By the very definition of the $L^1$ norm, we find
\begin{equation}\label{L1Diff}
\norm{\psi^c_t - \psi_t}_{L^1} = \abs{ U_{\W_t} \, \Delta \, U_{\W + tc} },
\end{equation}
where $U_{\W_t}$ and $U_{\W + tc}$ denote the open sets enclosed respectively by $\W_t:= \psi_t(\W)$ and $\W + tc$. 
Moreover, we can also write 
\begin{equation}\label{L1Dec}
\norm{\psi^c_t - \psi_t}_{L^1}= \norm{t(\phi_c- u) \nu - s(t) v \nu + t\xi_{c} }_{L^1}
\end{equation}
From \eqref{L1Dec} and \eqref{L1Diff} we obtain the inequality
\begin{equation}\label{L1Ineq}
t \norm{(\phi_c- u) \nu + \xi_c}_{L^1} \le  s(t) + \abs{ U_{\W_t} \, \Delta \, U_{\W + tc} } .
\end{equation}
However, since $\nu$ and $\xi_c$ are pointwise orthogonal, we find
\[
\norm{(\phi_c- u) \nu + \xi_c}_{L^1} \ge \norm{u - \phi_c}_{L^1}.
\]
Hence we obtain 
\begin{equation}\label{L1IneqDue}
t \norm{u - \phi_c}_{L^1} \le  s(t) + \abs{ U_{\W_t} \, \Delta \, U_{\W + tc} } .
\end{equation}
We take the infimum in $c$ in \eqref{L1IneqDue} and obtain the expression
\begin{equation}
t \inf_{c \in \erre^{n+1}} \norm{u - \phi_c}_{L^1} \le   s(t) + \inf_{c \in \erre^{n+1}} \abs{ U_{\W_t} \, \Delta \, U_{\W + c} } .
\end{equation}
Since $U_{\W_t}$ and $U_{\W + c} $ share the same volume, we can apply Theorem \ref{DeficiThm} to deduce that
\begin{equation}\label{L1End}
t \inf_{c \in \erre^{n+1}} \norm{u - \phi_c}_{L^1} \le   s(t) +  C(n) \sqrt{\PF(\W_t) - \PF(\W)}.
\end{equation}
The computations made in \cite[Prop. 2.1]{KoisoPalmer} gives us the following equalities:
\[
\int_{\W} u = 0 \Rightarrow \restr{ \frac{d}{dt} \PF(\W_t) }{t=0} = 0,\quad L[u]=0 \Rightarrow \restr{ \frac{d^2}{dt^2} \PF(\W_t) }{t=0} = 0,
\]
which, plugged in \eqref{L1End}, give
\begin{equation}
t \inf_{c \in \erre^{n+1}} \norm{u - \phi_c}_{L^1} \le   s(t) + O\coup*{ t^{\frac{3}{2}} }.
\end{equation}
Dividing by $t$ and letting $t \to 0$, we obtain
\[
\inf_{c \in \erre^{n+1}} \norm{u - \phi_c}_{L^1} = 0
\]
and since the infimum is attained, the thesis is proven.
\end{proof}

\end{document}